\newtheorem{theorem}{Theorem}[section]
\newtheorem{lemma}[theorem]{Lemma}
\theoremstyle{definition}
\newtheorem{remark}{Remark}
\providecommand{\customgenericname}{}
\newcommand{\newcustomtheorem}[2]{%
  \newenvironment{#1}[1]
  {%
   \renewcommand\customgenericname{#2}%
   \renewcommand\theinnercustomgeneric{##1}%
   \innercustomgeneric
  }
  {\endinnercustomgeneric}
}
\newcommand\relphantom[1]{\mathrel{\phantom{#1}}}
\numberwithin{equation}{section}
\begin{document}

\address{School of Mathematics \\
           Korea Institute for Advanced Study, Seoul\\
           Republic of Korea}
   \email{qkrqowns@kias.re.kr}

\author{Bae Jun Park}

\title[Function spaces]{Boundedness of pseudo-differential operators of type $(0,0)$ on Triebel-Lizorkin and Besov spaces}
\keywords{}

\begin{abstract} 
In this work we establish sharp boundedness results for pseudo-differential operators corresponding to $a\in\mathcal{S}_{0,0}^{m}$ on Triebel-Lizorkin spaces $F_p^{s,q}$ and Besov spaces $B_p^{s,q}$. 
\end{abstract}

\maketitle

\section{\textbf{Introduction}}\label{intro}

Let $S(\mathbb{R}^d)$ denote the Schwartz space and $S'(\mathbb{R}^d)$ the space of tempered distributions. For the Fourier transform of $f\in S(\mathbb{R}^d)$ we use the definition $\widehat{f}(\xi):=\int_{\mathbb{R}^d}{f(x)e^{-2\pi i\langle x,\xi\rangle}}dx$ and denote by $f^{\vee}$ the inverse Fourier transform of $f$. We also extend these transforms to the space of tempered distributions.

For $0\leq \delta\leq \rho\leq 1$ and $m\in\mathbb{R}$  a symbol $a$ in H\"ormander's class $\mathcal{S}_{\rho,\delta}^{m}$ is a smooth function defined on $\mathbb{R}^d\times \mathbb{R}^d$ satisfying that for all multi-indices $\alpha$ and $\beta$ there exists a constant $C_{\alpha,\beta}$ such that 
\begin{equation*}
\big| \partial_{\xi}^{\alpha}\partial_{x}^{\beta}a(x,\xi)\big|\leq C_{\alpha,\beta}\big( 1+|\xi|\big)^{m-\rho|\alpha|+\delta |\beta|},\quad x,\xi\in\mathbb{R}^d,
\end{equation*}
and the corresponding pseudo-differential operator $T_{[a]}$ is given by 
\begin{equation*}
T_{[a]}f(x):=\int_{\mathbb{R}^d}{a(x,\xi)\widehat{f}(\xi)e^{2\pi i\langle x,\xi\rangle}}d\xi,\quad f\in S(\mathbb{R}^d).
\end{equation*}

The operator $T_{[a]}$ is well-defined on $S(\mathbb{R}^d)$ and  it maps $S(\mathbb{R}^d)$ continuously into itself. For $(\rho,\delta)\not= (1,1)$  the operators form a class invariant under taking adjoints, and thus we have $T_{[a]}:S'(\mathbb{R}^d)\to S'(\mathbb{R}^d)$ via duality. See \cite{Ho,Ho1,Park} for details.

For $0\leq\delta\leq \rho<1$ the boundedness of $T_{[a]}\in Op\mathcal{S}_{\rho,\delta}^{m}$ was studied, for example, by Calder\'on and Vaillancourt in \cite{Ca_Va}, by Fefferman in \cite{Fe}, and by P\"aiv\"arinta and Somersalo in \cite{Pa_So}.
The operators are bounded on $h^p$ for $0<p<\infty$ if
\begin{equation*}
m\leq -d(1-\rho)\big| 1/2-1/p\big|.
\end{equation*}
The author \cite{Park} generalized this result to Triebel-Lizorkin and Besov spaces for $0<\rho<1$.
\begin{customthm}{A}\label{theorema}
Let $0 <  \rho <1$, $0<p,q,t\leq \infty$, and $s_1,s_2 \in \mathbb{R}$. Suppose $m\in\mathbb{R}$ satisfies \begin{equation}\label{ccexample}
m-s_1+s_2\leq -d(1-\rho)\big|  1/2-1/p \big|
\end{equation} and $a\in\mathcal{S}_{\rho,\rho}^m$. Then $T_{[a]}$ maps $F_p^{s_1,q}(\mathbb{R}^d)$ into $F_p^{s_2,t}(\mathbb{R}^d)$ if one of the following cases holds;
\begin{enumerate}
\item if  $m-s_1+s_2<-d(1-\rho)\big| 1/2-1/p  \big|$,

\item if  $p=2$, $q\leq 2\leq t$, and $m-s_1+s_2=0$,

\item if $0<p<2$, $p\leq t\leq\infty$, $0<q\leq \infty$, and $m-s_1+s_2=-d(1-\rho)\big(1/p-1/2\big)$,

\item if $2< p\leq \infty$, $0<t\leq\infty$, $0<q\leq p$,  and $m-s_1+s_2=-d(1-\rho)\big(1/2-1/p\big)$.

\end{enumerate}
\end{customthm}

\begin{customthm}{B}\label{theoremb}
Let $0 <  \rho <1$, $0<p,q,t\leq \infty$, and $s_1,s_2 \in \mathbb{R}$. Suppose $m\in\mathbb{R}$ satisfies (\ref{ccexample}) and $a\in\mathcal{S}_{\rho,\rho}^m$. Then $T_{[a]}$ maps $B_p^{s_1,q}(\mathbb{R}^d)$ into $B_p^{s_2,t}(\mathbb{R}^d)$ if one of the following cases holds;
\begin{enumerate}
\item if  $m-s_1+s_2<-d(1-\rho)\big| 1/2-1/p  \big|$,

\item if  $q\leq t$, and $m-s_1+s_2=-d\big|1/2-1/p \big|$.

\end{enumerate}
\end{customthm}

The key idea to prove Theorem \ref{theorema} (for $2<p\leq \infty$) is an estimate for a family of operators that is reminiscent of Calder\'on-Zygmund theory in \cite{Pr_Ro_Se}, but the argument does not work when $\rho=0$.
In order to illustrate this, take a simple case in which $a(D)$ is a multiplier operator
 corresponding to $a(x,\xi)=a(\xi)$ in $\mathcal{S}_{\rho,0}^m$.
 By applying (inhomogeneous) Littlewood-Paley operators $\{\Lambda_k\}_{k=0}^{\infty}$, defined in Section \ref{functionspace},
we decompose $a(D)f=\sum_{k=0}^{\infty}{\Lambda_ka(D)f}$ and observe that the kernel $K_k(x,y)$ of the operator $\Lambda_ka(D)$ satisfies the size estimate
\begin{equation}\label{keyideaprevious}
|K_k(x,y)|\lesssim_M 2^{-k(\rho M-m)}\frac{1}{|x-y|^M}, \qquad |x-y|\gtrsim 1
\end{equation} by using integration by parts $M$ times. If $\rho>0$, by taking $M$ sufficiently large , one has very nice decay $2^{-k(\rho M-m)}$ (even though $\rho$ is very small). Actually, the estimate (\ref{keyideaprevious}) is an essential part in the proof of Theorem \ref{theorema}. However, when $\rho=0$ the decay property is no longer available and an alternative argument will be needed in this case.

Moreover, Theorem \ref{theorema} and \ref{theoremb} are sharp in the sense that the condition (\ref{ccexample}) is  necessary and when the equality of (\ref{ccexample}) holds the assumptions on $q,t$ are necessary. 
To be specific, the boundedness results fail with the oscillatory multiplier operator 
\begin{equation}\label{multiplierexample}
c_{m,\rho}(D)=\dfrac{e^{ -2\pi i |D|^{{(1-\rho)}}}}{(1+|D|^2)^{-{m}/{2}}}
\end{equation} if the assumptions do not work.
However, when $\rho=0$ this does not hold anymore. Indeed, it is known in \cite[Theorem 4.2]{Mi} that $c_{m,0}(D)$ is bounded on $h^p(\mathbb{R}^d)\big(=F_p^{0,2}(\mathbb{R}^d)\big)$ if and only if $m\leq-(d-1)\big|1/2-1/p \big|$ and therefore the operator does not provide a sharp boundedness estimate for  $\rho=0$.

In this paper we extend Theorem \ref{theorema} and \ref{theoremb} to $\rho=0$ and construct new counterexamples to achieve the sharpness of our results.
The main results of this paper are the following theorems.
\begin{theorem}\label{main}
Let $0<p,q,t\leq\infty$ and $s_1,s_2\in\mathbb{R}$. Suppose $m\in\mathbb{R}$ satisfies
\begin{equation}\label{ccexample2}
m-s_1+s_2\leq -d\big|1/2-1/p \big|
\end{equation} and $a\in\mathcal{S}_{0,0}^{m}$. Then $T_{[a]}$ maps $F_p^{s_1,q}(\mathbb{R}^d)$ into $F_p^{s_2,t}(\mathbb{R}^d)$ if one of the following cases holds;
\begin{enumerate}
\item if  $m-s_1+s_2<-d\big| 1/2-1/p  \big|$,

\item if  $p=2$, $q\leq 2\leq t$, and $m-s_1+s_2=0$,

\item if $0<p<2$, $p\leq t\leq\infty$, $0<q\leq \infty$, and $m-s_1+s_2=-d\big(1/p-1/2\big)$,

\item if $2< p\leq \infty$, $0<t\leq\infty$, $0<q\leq p$,  and $m-s_1+s_2=-d\big(1/2-1/p\big)$.
\end{enumerate}

\end{theorem}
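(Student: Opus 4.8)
The plan is to reduce the $\mathcal{S}_{0,0}^m$ problem to a discrete, frequency-localized inequality via a Littlewood–Paley decomposition, and then exploit almost-orthogonality together with Sobolev-type embeddings to compensate for the complete lack of decay in $\xi$ (the defining feature of the $(0,0)$ class). First I would dyadically decompose the symbol: writing $a(x,\xi)=\sum_{j\ge 0}a_j(x,\xi)$ with $a_j(x,\xi)=a(x,\xi)\psi_j(\xi)$ for a standard partition of unity $\{\psi_j\}$, so that $a_j$ is supported in $|\xi|\sim 2^j$ and, crucially, satisfies $|\partial_\xi^\alpha\partial_x^\beta a_j(x,\xi)|\lesssim_{\alpha,\beta}2^{jm}$ with \emph{no} gain in $\alpha$. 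Correspondingly $T_{[a]}f=\sum_j T_{[a_j]}f$, and since $T_{[a_j]}f$ has frequency support in an annulus of size $2^j$, the key is to estimate each $T_{[a_j]}f$ in terms of $f_j:=\phi_j(D)f$ (its Littlewood–Paley piece), up to a harmless finite overlap in $j$. The heart of the matter is a single-block bound of the shape
\begin{eqnarray*}
\big\| T_{[a_j]}f\big\|_{L^p}\lesssim 2^{jm}2^{jd|1/2-1/p|}\big\| \phi_j(D)f\big\|_{L^p},
\end{eqnarray*}
which is exactly the source of the loss $-d|1/2-1/p|$ appearing in \eqref{ccexample2}. To prove this I would use the kernel representation $K_j(x,y)=\int a_j(x,\xi)e^{2\pi i\langle x-y,\xi\rangle}d\xi$: integrating by parts in $\xi$ only $\lfloor d/2\rfloor+1$ times gives $|K_j(x,y)|\lesssim 2^{jd}(1+2^j|x-y|)^{-N}$ for $N\le \lfloor d/2\rfloor+1$, i.e. an $L^2$-type kernel bound via Schur/Plancherel that yields the $L^2\to L^2$ estimate $\|T_{[a_j]}\|_{L^2\to L^2}\lesssim 2^{jm}$ (Calderón–Vaillancourt at scale $2^j$), while the cruder $L^1$ and $L^\infty$ bounds lose a full $2^{jd/2}$; interpolation then produces the stated $2^{jd|1/2-1/p|}$ factor. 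For $p<1$ one instead works with the atomic/molecular or the square-function characterization of $F_p^{s,q}$ and proves the block estimate directly in $h^p$.

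Next I would assemble the pieces. Having localized, matters reduce to showing that the operator $\{f_j\}\mapsto\{2^{js_2}T_{[a_j]}f\}$ is bounded from $\ell^q(L^p)$-type spaces (with weight $2^{js_1}$, i.e. from $F_p^{s_1,q}$, which by the standard characterization is $\|(\sum_j 2^{js_1 q}|f_j|^q)^{1/q}\|_{L^p}$) into the corresponding $F_p^{s_2,t}$ norm. In the strict-inequality case (1), the block bound gives a factor $2^{j(m-s_1+s_2)}2^{jd|1/2-1/p|}=2^{-j\varepsilon}$ for some $\varepsilon>0$, so summing in $j$ is trivial by Hölder/the triangle inequality in $\ell^t$, and no relation between $q$ and $t$ is needed — this handles (1). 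In the endpoint cases (2)–(4), there is no decay to sum, so one must genuinely use the structure of the $F_p^{s,q}$ norm. For case (2), $p=2$: here the mixed-norm is an honest Hilbert-space norm, $\|f\|_{F_2^{s_1,q}}\sim\|\{2^{js_1}f_j\}\|_{\ell^q(L^2)}$ and similarly with $t$; since $q\le 2$ we embed $\ell^q\hookrightarrow\ell^2$ on the input side, apply the orthogonal $L^2$ bound block-by-block (with $m-s_1+s_2=0$), sum in $\ell^2$, and then embed $\ell^2\hookrightarrow\ell^t$ on the output side since $t\ge 2$. For case (3), $0<p<2$: the input $\ell^q$ is harmless (take $q$ as large as we like via nesting after using that $F_p^{s_1,q}$ gets larger as $q$ grows — more precisely one uses $F_p^{s_1,q}\hookrightarrow F_p^{s_1,\infty}$ only morally; in fact the correct move is that for $p<2$ the relevant input control is at $q$ arbitrary because the block estimate is pointwise in $j$), while the output constraint $p\le t$ is exactly what allows Minkowski's inequality $\|\{g_j\}\|_{L^p(\ell^t)}\le \|\{g_j\}\|_{\ell^t(L^p)}$ (valid when $t\ge p$) to convert the easily-summable $\ell^t(L^p)$ bound — obtained from the block estimate with the critical exponent — into the desired $F_p^{s_2,t}$ norm. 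Case (4), $p>2$, is dual/analogous with the roles reversed: $q\le p$ lets one pass from the $F_p^{s_1,q}$ norm to an $\ell^q(L^p)$-type quantity with $q\le p$ via Minkowski in the other direction on the input side, and the output $t$ is then unrestricted because $\ell$-summation on the output is the easy one.

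The main obstacle — and the whole point of removing the hypothesis $\rho>0$ of Theorem~A — is that the argument of \cite{Pr_Ro_Se} used there relies on a Calderón–Zygmund-type estimate for a family of operators whose kernels have decay governed by powers of $(1-\rho)$; at $\rho=0$ that decay disappears and the $\ell^q$–$\ell^t$ interplay that was "for free" in the $\rho>0$ regime must instead be extracted by hand, exactly through the Minkowski-inequality case analysis above. Concretely, the delicate point is the \emph{sharp} constant $2^{jd|1/2-1/p|}$ in the single-block estimate: one must verify that integration by parts $\lfloor d/2\rfloor+1$ times (the minimum needed for an $L^2$ kernel) is both available — it is, since $\mathcal{S}_{0,0}^m$ grants arbitrarily many $\xi$-derivatives with no loss — and sufficient, i.e. that no extra logarithmic or polynomial factor in $2^j$ creeps in when one interpolates the $L^2$ bound against the $L^1,L^\infty$ bounds and then sums a non-decaying series at the endpoint. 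I expect the bookkeeping for $p<1$ (passing to $h^p$ and using a molecular decomposition, where the block estimate must be proved against atoms rather than by interpolation) and the precise justification of the Minkowski step in cases (3)–(4) to be where the real work lies; everything else is a standard Littlewood–Paley assembly.
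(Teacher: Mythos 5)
Your proposal decomposes the symbol only in $\xi$, writing $a=\sum_j a_j$ with $a_j(x,\xi)=a(x,\xi)\psi_j(\xi)$, and then asserts that $T_{[a_j]}f$ has frequency support in the annulus $|\xi|\sim 2^j$. This is false: because $a(x,\xi)$ is not constant in $x$, the $x$-oscillations of $a_j(x,\xi)$ smear the output across all frequencies, and $\widehat{T_{[a_j]}f}$ is \emph{not} supported in $|\xi|\sim 2^j$. This is precisely the reason the paper works with the paradifferential (Coifman--Meyer type) decomposition $a=a^{(1)}+a^{(2)}+a^{(3)}$ obtained from the double family $a_{j,k}=\phi_j\ast a(\cdot,\xi)\,\widehat{\phi_k}(\xi)$: for the dominant ``high-low'' piece $a^{(3)}$, the $x$-frequency of the symbol is much lower than the $\xi$-frequency, so $T_{[b_k]}f_k$ genuinely sits in $\{|\xi|\approx 2^k\}$. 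Without that reduction, the entire Littlewood--Paley assembly in your argument — using $\Lambda_k T_{[a]}f\approx T_{[a_k]}f_k$ — simply does not hold. The low-high and diagonal pieces need to be bounded separately, which the paper imports from \cite{Park} as the estimate (\ref{strongerest}).

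The second and more serious gap is in your handling of the endpoint cases (3) and (4). In case (3) you go from the single-block bound to $\Vert\{2^{ks_2}T_{[b_k]}f_k\}\Vert_{\ell^t(L^p)}\lesssim\Vert\{2^{ks_1}f_k\}\Vert_{\ell^t(L^p)}$ and then apply Minkowski (using $p\le t$) on the left. But you still need $\Vert\{2^{ks_1}f_k\}\Vert_{\ell^t(L^p)}\lesssim\Vert f\Vert_{F_p^{s_1,q}}$ for \emph{all} $q\le\infty$, which is a Triebel--Lizorkin-into-Besov embedding $F_p^{s_1,q}\hookrightarrow B_p^{s_1,t}$; this fails once $q>t$ (and worse, $q$ is allowed to be $\infty$), so the argument does not close. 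In case (4), the output exponent $t$ may be smaller than $p$, and then $\Vert\{\cdot\}\Vert_{L^p(\ell^t)}\le\Vert\{\cdot\}\Vert_{\ell^t(L^p)}$ goes the wrong way, so the block estimate summed in $\ell$-norm does not control the $F_p^{s_2,t}$ quasi-norm. The paper circumvents both problems by a genuinely different route: for $0<p\le 1$ it uses the $\varphi$-transform and the $\infty$-atom decomposition of $f_p^{s_1,\infty}$ (Lemma \ref{decomhardy}), reducing to the uniform atom bound (\ref{reductionshow}) which is proved with weighted $L^2$ kernel estimates (\ref{kernelest}) coming from the adjoint symbol; for $2<p<\infty$ it uses the dyadic sharp-maximal characterization of $F_p^{s,q}$ with $q<p$ (Lemma \ref{54}) together with a Pramanik--Rogers--Seeger style linearization and complex interpolation between an $L^2$ bound and a local $L^2$-average $L^\infty$ bound (Lemma \ref{inftylemma}). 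Also note that your claimed $L^\infty$ block bound with a factor $2^{jd/2}$ is not a triviality: the kernel of $T_{[b_k]}$ is only $L^2$ in each variable, not $L^1$, so there is no crude Schur bound and the paper has to prove (\ref{inftyneed}) via Lemmas \ref{fourierseries} and \ref{inftylemma}. Your ``standard Littlewood--Paley assembly'' therefore omits the two ideas that actually carry the proof.
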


\begin{theorem}\label{main2}
Let $0<p,q,t\leq \infty$, and $s_1,s_2 \in \mathbb{R}$. Suppose $m\in\mathbb{R}$ satisfies (\ref{ccexample2}) and $a\in\mathcal{S}_{0,0}^m$. Then $T_{[a]}$ maps $B_p^{s_1,q}(\mathbb{R}^d)$ into $B_p^{s_2,t}(\mathbb{R}^d)$ if one of the following cases holds;
\begin{enumerate}
\item if  $m-s_1+s_2<-d(1-\rho)\big| 1/2-1/p  \big|$,

\item if  $q\leq t$, and $m-s_1+s_2=-d\big|1/2-1/p \big|$.

\end{enumerate}
\end{theorem}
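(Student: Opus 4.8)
Here the statement to be proved is Theorem~\ref{main2}, the Besov--space version.

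\smallskip

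The plan is: (i) remove the smoothness indices $s_1,s_2$ by conjugating with Bessel potentials; (ii) Littlewood--Paley decompose the symbol in the frequency variable, which (because $\delta=0$) essentially diagonalizes $T_{[a]}$ with respect to frequency; (iii) prove a single--annulus $L^p$ estimate for the resulting pieces; (iv) reassemble, exploiting that the Besov norm only couples the dyadic pieces through an $\ell^q$--norm of a sequence of $L^p$--norms. For step (i): $J^\sigma:=(1-\Delta)^{\sigma/2}$ is an isomorphism $B_p^{\tau,r}\to B_p^{\tau-\sigma,r}$ for all $0<p,r\le\infty$ and all $\tau$, so $T_{[a]}:B_p^{s_1,q}\to B_p^{s_2,t}$ is equivalent to the boundedness of $\widetilde T:=J^{s_2}T_{[a]}J^{-s_1}$ from $B_p^{0,q}$ to $B_p^{0,t}$. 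Since $J^{-s_1}$ and $J^{s_2}$ are $x$--independent Fourier multipliers, composing with them on the right is literally symbol multiplication and on the left produces only a harmless asymptotic remainder; hence $\widetilde T\in Op\,\mathcal{S}_{0,0}^{\,m-s_1+s_2}$. We may therefore assume $s_1=s_2=0$, so that (\ref{ccexample2}) reads $m\le -d|1/2-1/p|$ and we must prove $T_{[a]}:B_p^{0,q}\to B_p^{0,t}$.

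Next, fix a resolution $1=\sum_{j\ge0}\sigma_j$ with $\mathrm{supp}\,\sigma_j\subset\{|\xi|\sim 2^j\}$ for $j\ge1$, write $a_j(x,\xi)=a(x,\xi)\sigma_j(\xi)$, so that $T_{[a]}=\sum_{j\ge0}T_{[a_j]}$ and $T_{[a_j]}f=T_{[a_j]}\widetilde\Delta_j f$ with $\widetilde\Delta_j$ a slightly enlarged frequency projection onto $\{|\xi|\sim 2^j\}$. Because $a\in\mathcal{S}_{0,0}^m$ has all $x$--derivatives of size $O(2^{jm})$ on the support of $\sigma_j$, a secondary Littlewood--Paley decomposition in the $x$--variable shows that $\Delta_\ell T_{[a_j]}\widetilde\Delta_j$ decays rapidly in $|\ell-j|$; concretely, granting the central estimate
\[
\big\|T_{[a_j]}g\big\|_{L^p}\ \lesssim\ 2^{\,j(m+d|1/2-1/p|)}\,\|g\|_{L^p}\qquad(0<p\le\infty)
\]
for $g$ with $\widehat g$ supported in $\{|\xi|\sim 2^j\}$, one upgrades it to
\[
\big\|\Delta_\ell T_{[a_j]}\widetilde\Delta_j f\big\|_{L^p}\ \lesssim_M\ 2^{-M|\ell-j|}\,2^{\,j(m+d|1/2-1/p|)}\,\big\|\widetilde\Delta_j f\big\|_{L^p}
\]
for every $M$.

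The central estimate is the hard point. For $p=2$ it is exactly the Calder\'on--Vaillancourt theorem applied to $2^{-jm}a_j$, which uses only finitely many derivatives and produces the factor $2^{jm}$ with no loss. For $p\in\{1,\infty\}$ the key observation is that the kernel $K_j(x,y)=\int a_j(x,\xi)e^{2\pi i\langle x-y,\xi\rangle}\,d\xi$, although concentrated only at the \emph{unit} spatial scale (not at scale $2^{-j}$, since $a$ is merely of type $(0,0)$), nonetheless satisfies $\sup_x\|K_j(x,\cdot)\|_{L^1_y}\lesssim 2^{jm}2^{jd/2}$ and, by duality, $\sup_y\|K_j(\cdot,y)\|_{L^1_x}\lesssim 2^{jm}2^{jd/2}$ --- the exponent being $d/2$ because the volume $2^{jd}$ of the $\xi$--support enters only through $\|a_j(x,\cdot)\|_{L^2_\xi}$ (Plancherel, against a rapidly decaying weight), not through $\|a_j\|_\infty$ times the volume. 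Interpolating these $L^1\to L^1$ and $L^\infty\to L^\infty$ bounds with the $L^2\to L^2$ bound yields the central estimate for $1\le p\le\infty$; for $0<p<1$ one argues directly from the kernel estimates together with the frequency localization of $g$ (or via a Nikol'ski\u\i--type reduction to $p=1$). This interplay --- the $L^2$--based Calder\'on--Vaillancourt bound versus the volume loss of the $2^{jd}$--annulus --- is precisely what forces the exponent $d|1/2-1/p|$, hence the condition (\ref{ccexample2}); it is also the core of Theorem~\ref{main}, with the difference that there it is fed into a square--function argument across frequencies, whereas for Besov spaces it is applied one frequency at a time.

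Granting the two displays above, one concludes as follows. Put $r=\min(1,p)$ and $\beta=m+d|1/2-1/p|\le0$; the quasi--triangle inequality in $L^p$ gives
\[
\big\|\Delta_\ell T_{[a]}f\big\|_{L^p}^{\,r}\ \lesssim\ \sum_{j\ge0}2^{-Mr|\ell-j|}\,2^{\,\beta r j}\,\big\|\widetilde\Delta_j f\big\|_{L^p}^{\,r}.
\]
Taking $\ell^t$--norms over $\ell$ and using a discrete Young/Schur inequality (with $M$ large): if $\beta<0$ the weight $2^{\beta j}$ is summable, so $T_{[a]}:B_p^{0,q}\to B_p^{0,t}$ is bounded for \emph{all} $q,t$; if $\beta=0$ the weight is identically $1$ and what remains is exactly the embedding $\ell^q\hookrightarrow\ell^t$, i.e. $q\le t$. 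Undoing the Bessel lifting returns the two cases of Theorem~\ref{main2}. The only genuinely hard step is the single--annulus estimate in step~(iii); everything else is bookkeeping with weighted sequence spaces.
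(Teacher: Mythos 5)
Your approach is a genuinely different route from the paper's, and it would work once one small gap is filled. The paper reuses the paradifferential decomposition $a=a^{(1)}+a^{(2)}+a^{(3)}$ from \cite{Park}, disposes of $a^{(1)},a^{(2)}$ by citing the stronger estimate (\ref{strongerest}), and then deduces the $a^{(3)}$ piece from the single--annulus bound (\ref{previous}) --- which is itself imported from the known $h^r$--boundedness of $\mathcal{S}_{0,0}$--operators --- together with (\ref{inftyneed}), which the paper proves with considerable effort via Lemma~\ref{fourierseries} and Lemma~\ref{inftylemma} (the effort is really there to produce the gain $2^{-\epsilon(k-\mu)}$ needed later for $F_\infty^{s,q}$; for Besov spaces your short Plancherel--plus--rapid--decay kernel argument giving $\sup_x\|K_j(x,\cdot)\|_{L^1}\lesssim 2^{j(m+d/2)}$ is a genuine simplification). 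Your Bessel--potential reduction to $s_1=s_2=0$ is fine, the $\ell^q\hookrightarrow\ell^t$ step at the endpoint $\beta=0$ is exactly where the hypothesis $q\le t$ is used, and the off--diagonal decay via a secondary $x$--decomposition is correct and is morally the same bookkeeping as the paper's $a^{(1)},a^{(2)},a^{(3)}$ split. The one place that needs repair is the order of the two displayed estimates for $0<p<1$: the ``central estimate'' $\|T_{[a_j]}g\|_{L^p}\lesssim 2^{j(m+d(1/p-1/2))}\|g\|_{L^p}$ as stated is not directly deducible from $h^p$--boundedness, because $T_{[a_j]}g$ has no output frequency localization and the $L^p$ and $h^p$ quasi--norms are not comparable for general functions when $p<1$. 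You should instead prove the ``upgraded'' estimate directly: after the secondary $x$--decomposition $a_j=\sum_i a_{i,j}$, each piece $T_{[a_{i,j}]}\widetilde\Delta_j f$ \emph{is} band--limited (to frequencies $\sim\max(2^i,2^j)$ when $|i-j|\ge3$), so $\|\cdot\|_{L^p}\approx\|\cdot\|_{h^p}$ applies on the output, the $h^p$ bound for $\mathcal{S}_{0,0}^{-d(1/p-1/2)}$ symbols gives the scale factor $2^{j(m+d(1/p-1/2))}$, and the factor $2^{-iM}$ in $\|a_{i,j}\|_\infty$ supplies the off--diagonal decay $2^{-M|\ell-j|}$. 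This is precisely what the paper's $x$--frequency truncation in the definition of $b_k$ accomplishes, so once you reorganize in this way your proof closes the gap and stands as a more self--contained alternative to the paper's argument.
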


Theorem \ref{main} and \ref{main2} are sharp in the following sense (except the case $p=\infty$ in Theorem \ref{main2} (2)).

\begin{theorem}\label{sharptheorem}
Let $0<p,q,t\leq \infty$, $s_1,s_2\in\mathbb{R}$ and $m\in\mathbb{R}$. Then there exists a symbol $a\in\mathcal{S}_{0,0}^{m}$ such that
\begin{equation*}
\Vert T_{[a]}\Vert_{F_p^{s_1,q}(\mathbb{R}^d)\to F_p^{s_2,t}(\mathbb{R}^d)}=\infty
\end{equation*} if one of the following conditions holds;
\begin{enumerate}
\item if  $m-s_1+s_2>-d\big| 1/2-1/p  \big|$,

\item if $m-s_1+s_2=-d\big(1/p-1/2\big)$, $0<p\leq 2$, $0<q\leq \infty$, and $t<p$,

\item if $m-s_1+s_2=-d\big(1/2-1/p\big)$, $2\leq p<\infty$, $0<t\leq \infty$, and $p<q$.
\end{enumerate}
\end{theorem}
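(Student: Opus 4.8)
The plan is to construct a single symbol $a \in \mathcal{S}_{0,0}^m$ that simultaneously witnesses the failure of boundedness in all three regimes, by superposing elementary building blocks supported on distinct dyadic frequency annuli. For each $j$, let $\psi_j$ be a smooth bump adapted to the annulus $\{|\xi| \sim 2^j\}$; since $\rho = \delta = 0$, any function of the form $\sum_j \varepsilon_j \, 2^{mj}\, \psi_j(\xi)\, e^{i \langle x, \eta_j\rangle}$ with $|\eta_j| \lesssim 2^j$ and bounded coefficients $\varepsilon_j$ lies in $\mathcal{S}_{0,0}^m$, because $x$-derivatives only produce powers of $|\eta_j| \lesssim 2^j = (1+|\xi|)^{1}$ — but wait, we need the $x$-growth to be harmless, so in fact we should take the $x$-dependence to be \emph{bounded}, e.g. pure modulation in $\xi$ only, giving Fourier multipliers $m_j(D)$ with $\widehat{m_j}$ a bump of height $2^{mj}$ on the $j$-th annulus, modulated so that $m_j(D)$ acts like a suitably normalized exponential sum. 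The point of working in $\mathcal{S}_{0,0}^m$ rather than $\mathcal{S}_{0,0}^0$ is precisely that the gain $2^{mj}$ is available for free on the $j$-th block.

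For part (1), the strategy is the classical one: test $T_{[a]}$ against (modulated) bump functions concentrated at a single frequency scale $2^j$ and localized in space, and track the mismatch. A function $f$ with $\widehat{f}$ a bump on $\{|\xi|\sim 2^j\}$ has $\|f\|_{F_p^{s_1,q}} \sim 2^{s_1 j}\|f\|_{L^p}$, and after applying a well-chosen multiplier block one gets a function whose $F_p^{s_2,t}$ norm is $\sim 2^{(s_2+m)j}$ times a factor measuring how badly $L^p$-concentration is destroyed; the worst case is the Knapp/randomization example which produces a factor $2^{dj|1/2-1/p|}$. Letting $j \to \infty$ and using $m - s_1 + s_2 > -d|1/2-1/p|$ forces the operator norm to be infinite. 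The cleanest way to package this is to note that a \emph{single} exponential-sum multiplier of Miyachi--Peral type, $e^{-2\pi i |D|}$ truncated to the $j$-th annulus and scaled by $2^{mj}$, already realizes the $2^{dj|1/2-1/p|}$ loss, so one superposes these over $j$.

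For parts (2) and (3), the mechanism is different: here the exponent inequality holds with equality, so no single scale blows up; instead the failure comes from the $\ell^t$ (respectively $\ell^q$) summation over scales. One builds $f = \sum_{j\in E} f_j$ where the $f_j$ are frequency-disjoint normalized bumps on annuli indexed by a finite set $E$ with $|E| = N$; then $\|f\|_{F_p^{s_1,q}} \sim N^{1/q}$ (roughly, using $\rho=0$ Littlewood--Paley orthogonality and the fact that the pieces live on separate annuli) while, having chosen the symbol on each block to be the critical exponential-sum multiplier achieving the endpoint loss, $\|T_{[a]}f\|_{F_p^{s_2,t}} \sim N^{1/t}$ when $0<p\le 2$ (part (2)) — here the $L^p$-loss exactly cancels the $2^{(m-s_1+s_2)j}$ gain, leaving the counting exponents. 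Since $t < p \le 2$... actually the relevant comparison, after carefully inserting the $L^p \hookrightarrow \ell^?$ embeddings that come from the disjoint-frequency structure, is between $N^{1/t}$ and $N^{1/q}$ (or with a $p$ in place of one of them), and the hypothesis $t<p$ in (2), resp. $p<q$ in (3), makes the ratio $\to\infty$ as $N\to\infty$. Parts (2) and (3) are dual to each other under the $p \leftrightarrow p'$, $q\leftrightarrow t$ symmetry, so it suffices to do one and transpose.

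The main obstacle I expect is part (2)/(3): getting the \emph{sharp} endpoint loss $2^{dj(1/p-1/2)}$ (not merely $2^{\epsilon j}$ less) out of each frequency block while keeping the symbol genuinely in $\mathcal{S}_{0,0}^m$ with uniform constants, and then correctly computing the two competing summation exponents. This requires a quantitative lower bound for $\|c_{m,0}^{(j)}(D) f_j\|_{L^p}$ on a single annulus — the Miyachi-type estimate that $e^{-2\pi i|D|}$ restricted to $\{|\xi|\sim 2^j\}$ maps an $L^p$-normalized bump to something of $L^p$-norm $\gtrsim 2^{dj|1/2-1/p|}$ — together with near-orthogonality of the outputs across $j$ so that the $F_p^{s_2,t}$ norm of the sum really is $\gtrsim$ (the $\ell^t$ norm of the pieces). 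One must also handle $p = \infty$ and $q$ or $t = \infty$ by the usual reductions (replacing $\ell^\infty$ sums by $\sup$'s and checking the counting still diverges), and the statement's parenthetical exclusion of $p=\infty$ in Theorem \ref{main2}(2) signals that the Besov endpoint there is genuinely exceptional and should simply be omitted from the sharpness claim, consistent with how Theorem \ref{sharptheorem} is phrased only for Triebel--Lizorkin spaces.
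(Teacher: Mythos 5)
Your outline has the right architecture (superpose frequency-localized multiplier blocks, extract the endpoint loss per block, then let a counting exponent diverge), but two of its load-bearing steps are wrong, and they are precisely the places where the $\rho=0$ case differs from $\rho>0$.

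First, your proposed building block — the Miyachi oscillatory multiplier $e^{-2\pi i|D|}$ truncated to the $j$-th annulus — does \emph{not} realize the loss $2^{dj|1/2-1/p|}$. By the theorem of Miyachi quoted in the introduction of the paper, $c_{m,0}(D)=e^{-2\pi i|D|}(1+|D|^2)^{m/2}$ is bounded on $h^p$ if and only if $m\le -(d-1)|1/2-1/p|$; on a single annulus this gives a loss of only $2^{(d-1)j|1/2-1/p|}$, which falls short by a full power of $2^{j|1/2-1/p|}$. This is exactly why the sharpness example from the $\rho>0$ paper cannot be carried over, and why the theorem needs a different construction. The paper replaces the oscillatory phase by a \emph{random lattice exponential sum}: the multiplier block on the $k$-th annulus is $2^{\zeta_k m}\sum_{n\in\mathcal{N}_k} r_n(v)\,\widehat{\phi}(\xi-n)$ with $\mathcal{N}_k$ a set of $\sim 2^{\zeta_k d}$ integer lattice points; Khintchine's inequality shows this achieves the full $2^{\zeta_k d(1/p-1/2)}$ loss for $p<2$.

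Second, your counting bookkeeping is off, and you half-notice it yourself. Taking $N$ frequency-disjoint normalized bumps overlapping in space gives input norm $\sim N^{1/q}$, so you would need $t<q$, which is \emph{not} the hypothesis of part (2); the hypothesis is $t<p$. To make the input norm come out as $N^{1/p}$ (so that it is independent of $q$ and the claimed failure for all $q$ follows) the paper uses the Christ–Seeger randomization: the test function $f^{L,w}=\sum_k B_k\sum_{Q\in\mathcal{Q}(k)}\theta_Q(w)\mathcal{G}(2^{\zeta_k}(x-c_Q))$ places bumps at random dyadic positions with selection probability $A_k=2^{-\zeta_k d}$, and the choice $B_k=2^{\zeta_k d/p}$ makes the expected $F_p^{0,q}$ norm $\lesssim L^{1/p}$ for every $q$, while the average output norm is $\gtrsim L^{1/t}$. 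Both randomizations (in the symbol via $v$ and in the test function via $w$) are essential; neither appears in your sketch. For part (1), the paper in fact sidesteps a new construction by picking $\rho>0$ so that $m-s_1+s_2>-d(1-\rho)|1/2-1/p|$ and invoking the $\mathcal{S}_{\rho,0}^m\subset\mathcal{S}_{0,0}^m$ inclusion together with the known $\rho>0$ counterexample, or by deducing it from (2) and (3). Part (3) is obtained by duality, as you anticipate.
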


\begin{theorem}\label{sharptheorem2}
Let $0<p,q,t\leq \infty$, $s_1,s_2\in\mathbb{R}$ and $m\in\mathbb{R}$. Then there exists a symbol $a\in\mathcal{S}_{0,0}^{m}$ such that
\begin{equation*}
\Vert T_{[a]}\Vert_{B_p^{s_1,q}(\mathbb{R}^d)\to B_p^{s_2,t}(\mathbb{R}^d)}=\infty
\end{equation*} if one of the following conditions holds;
\begin{enumerate}
\item if  $m-s_1+s_2>-d\big| 1/2-1/p  \big|$,

\item if $q>t$, $p<\infty$ and $m-s_1+s_2=-d\big|1/2-1/p\big|$.

\end{enumerate}
\end{theorem}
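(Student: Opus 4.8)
### Proof Plan for Theorem \ref{sharptheorem2}

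The plan is to construct, for each failure regime, an explicit symbol $a \in \mathcal{S}_{0,0}^m$ together with a test function (or family of test functions) on which the operator norm $\Vert T_{[a]}\Vert_{B_p^{s_1,q}\to B_p^{s_2,t}}$ blows up. The natural candidate is a symbol of the form $a(x,\xi) = \Phi(\xi)\, e^{i\psi(\xi)}$ depending only on $\xi$ (a Fourier multiplier), where $\Phi$ is supported in a dyadic annulus $|\xi| \sim 2^j$ and $\psi$ is a suitably chosen oscillating phase; to land in $\mathcal{S}_{0,0}^m$ rather than a more regular class, one wants $\psi$ and $\Phi$ to carry as many ``bad'' frequency-localized derivatives as the class $\mathcal{S}_{0,0}^m$ permits, i.e. derivatives of order $k$ in $\xi$ should cost a factor $2^{j\varepsilon_k}$ but no negative power of $2^j$. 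A convenient device, already visible in the discussion of \eqref{multiplierexample}, is to superpose such building blocks over dyadic scales $j$ with carefully chosen coefficients; since the Besov norm decouples the Littlewood--Paley pieces, the lower bound then reduces to estimating a single frequency block and summing.

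First I would handle case (1), $m - s_1 + s_2 > -d|1/2-1/p|$, which is the ``order too large'' obstruction and should essentially coincide with the corresponding construction for Triebel--Lizorkin spaces. Here one uses a single-scale symbol: fix a large $j$, take $a(x,\xi) = \eta(2^{-j}\xi)\, e^{2\pi i \langle \theta_j, \xi\rangle}$ or a Knapp-type example $a(x,\xi) = \eta(2^{-j}\xi)$ acting on an input whose Fourier transform is an $L^2$-normalized bump on $|\xi|\sim 2^j$; computing both sides of the $B_p^{s_1,q}\to B_p^{s_2,t}$ inequality on this single block reduces the claim to the scalar inequality $2^{j(m - s_1 + s_2)} \lesssim 2^{-jd|1/2-1/p|}$, which fails as $j\to\infty$ precisely under the stated strict inequality. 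For case (2), $q > t$ and $m - s_1 + s_2 = -d|1/2-1/p|$ with $p < \infty$, the obstruction is a $\ell^q \not\hookrightarrow \ell^t$ phenomenon: I would superpose $N$ dyadic building blocks at scales $j_1 < j_2 < \dots < j_N$, each critically normalized so that its contribution to $\Vert \cdot\Vert_{B_p^{s_2,t}}$ equals a fixed constant independent of the block, while the input $f = \sum_k c_k f_{j_k}$ can be chosen (choosing $c_k$ as an extremal sequence for the embedding $\ell^q \not\subset \ell^t$, e.g. $c_k = k^{-1/t}$ truncated) so that $\Vert f\Vert_{B_p^{s_1,q}} \lesssim 1$ uniformly in $N$ but $\Vert T_{[a]}f\Vert_{B_p^{s_2,t}} \gtrsim (\log N)^{1/t - 1/q} \cdot N^{1/t}$ or an analogous unbounded quantity; letting $N \to \infty$ gives the infinite operator norm.

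The main obstacle is the bookkeeping at the critical line in case (2): one must simultaneously arrange that (a) the symbol genuinely lies in $\mathcal{S}_{0,0}^m$ with constants uniform in the number of superposed scales — this forces the phases/amplitudes on different annuli to be ``independent'' so that no cancellation or accumulation of constants occurs across scales, which is delicate because $\delta = \rho = 0$ gives no decay to absorb errors — and (b) the single-block input and output norms are computed sharply, not just up to harmless constants, since the whole point is the mismatch between the $\ell^q$ and $\ell^t$ summations of otherwise-comparable blocks. A secondary technical point is the exclusion of $p = \infty$: the argument for (2) should use duality or an explicit extremizer for the $B_p^{s_1,q}$-norm that is only available for $p < \infty$ (the $p=\infty$ Besov space being the one genuinely missing from the sharpness picture, as already flagged in the statement preceding Theorem \ref{sharptheorem}). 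I would also make sure the construction respects the full generality of $s_1, s_2$ by noting that the reduction to the scalar inequality only ever sees the combination $m - s_1 + s_2$, so a single symbol (depending on that combination) suffices for all admissible $(s_1, s_2)$.
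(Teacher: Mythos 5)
Your high-level plan for case (2) — superpose dyadic building blocks and exploit $\ell^q\not\hookrightarrow\ell^t$ by choosing the coefficient sequence appropriately — is the right starting point, and your choice of test function is essentially the paper's $g^L(x)=\sum_{k}C_k 2^{\zeta_k d}\mathcal{G}(2^{\zeta_k}x)$. But there is a genuine gap: you never explain how to obtain a \emph{lower} bound on the output block norms that matches the critical threshold. For a deterministic symbol consisting of unit-scale bumps in the annulus $|\xi|\sim 2^{\zeta_k}$, the modulated sum $\sum_{n\in\mathcal{N}_k}e^{2\pi i\langle x,n\rangle}$ is a Dirichlet-type kernel whose $L^p$-norm for $p<2$ is strictly smaller than the $|\mathcal{N}_k|^{1/2}$ you would need; the sharp constant is only attained on average. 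The missing device is the paper's Rademacher randomization of the symbol, $M^v(\xi)=\sum_k 2^{\zeta_k m}\sum_{n\in\mathcal{N}_k}r_n(v)\widehat{\phi}(\xi-n)$, combined with Khintchine's inequality, which converts the output $L^p$-norm (averaged over $v$) into $|\mathcal{N}_k|^{1/2}\approx 2^{\zeta_k d/2}$ and produces exactly the endpoint exponent $m+d/2$. Your remark about making the ``phases/amplitudes on different annuli independent'' gestures at this, but the actual mechanism (random signs, Fubini, Khintchine) is what closes the argument, and without it the proof doesn't go through for $p\le 2$. You also leave $2<p<\infty$ untreated; the paper disposes of it by duality.

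Case (1) of your proposal is incorrect as stated. A single smooth block multiplier $\eta(2^{-j}\xi)$ together with an $L^2$-normalized bump, or a Knapp rectangle, gives the ratio $\Vert T_{[a]}f_j\Vert_{B_p^{s_2,t}}/\Vert f_j\Vert_{B_p^{s_1,q}}\approx 2^{j(m-s_1+s_2)}$ with no gain of $2^{-jd|1/2-1/p|}$: such a symbol actually lies in $\mathcal{S}_{1,0}^m$, for which the corresponding operator is $L^p$-bounded, so this only shows failure when $m-s_1+s_2>0$, not in the full range $m-s_1+s_2>-d|1/2-1/p|$. To reach the correct threshold you need a symbol that genuinely oscillates on unit scale in $\xi$ (which is the whole content of $\rho=0$). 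The paper handles (1) by picking $\rho\in(0,1)$ with $m-s_1+s_2>-d(1-\rho)|1/2-1/p|$ and invoking the unboundedness of the known oscillatory multiplier $c_{m,\rho}(D)=e^{-2\pi i|D|^{1-\rho}}(1+|D|^2)^{m/2}$, which lies in $\mathcal{S}_{\rho,0}^m\subset\mathcal{S}_{0,0}^m$; alternatively it notes that (1) follows from (2). Either route avoids the issue above.
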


\begin{remark}
To complete the sharpness of Theorem \ref{main2} it remains to show the necessity of $q\leq t$ when $p=\infty$ and $m-s_1+s_2=-d/2$, but we couldn't resolve this issue.
\end{remark}

We point out that the cases $m-s_1+s_2<-d\big|1/2-1/p \big|$ in Theorem \ref{main} and \ref{main2} follow from $h^p$ boundedness of $T_{[a]}$ in \cite{Pa_So} and the use of proper embeddings, and so does Theorem \ref{main} (2).  Therefore, we will be concerned only with $(3)$ and $(4)$ in Theorem \ref{main} and $(2)$ in Theorem \ref{main2}. Furthermore, it suffices to prove that 
\begin{align}
\big\Vert T_{[a]}f\big\Vert_{F_p^{s_2,p}(\mathbb{R}^d)}&\lesssim \Vert f\Vert_{F_p^{s_1,\infty}(\mathbb{R}^d)} \quad \text{ for }~ 0<p\leq 1 \tag{Theorem \ref{main} (3)*} \label{bigclaim1}, \\
\big\Vert T_{[a]}f\big\Vert_{F_p^{s_2,t}(\mathbb{R}^d)}&\lesssim \Vert f\Vert_{F_p^{s_1,p}(\mathbb{R}^d)}  \quad \text{ for }~ 2<p\leq \infty,~ 0<t<1, \tag{Theorem \ref{main} (4)*}\label{bigclaim2}\\
\big\Vert T_{[a]}f\big\Vert_{B_p^{s_2,q}(\mathbb{R}^d)}&\lesssim \Vert f\Vert_{B_p^{s_1,q}(\mathbb{R}^d)}   \quad \text{ for }~ 0<q\leq \infty \tag{Theorem \ref{main2} (2)*}\label{bigclaim3}
\end{align} 
due to the embedding $F_p^{s,q}\hookrightarrow F_p^{s,t}$ for $q\leq t\leq \infty$. We will not pursue the case $1<p<2$ here because it clearly follows from the duality consideration in \cite[p556]{Park}.

 (\ref{bigclaim1}) can be proved by the method similar to that used in the proof of Theorem \ref{theorema} where we applied discrete characterization of $F_p^{s,q}$ via the Frazier-Jawerth's $\varphi$-transform and atomic decomposition for the discrete spaces.  
We are mainly interested in (\ref{bigclaim2}) (as the previous argument, like the decay property in (\ref{keyideaprevious}), is not applicable here). The proof is based on the characterization of a vector-valued function space via a certain sharp maximal function, stated in Lemma \ref{54}, and 
the $L^{\infty}$-$L_{avg}^2$ estimate in Lemma \ref{inftylemma}, which is key in the proof.

For the negative results, assume $m-s_1+s_2>-d\big|1/2-1/p\big|$ and choose $0<\rho<1$ such that $m-s_1+s_2>-d(1-\rho)\big|1/2-1/p \big|$. Then the oscillatory multiplier $c_{m,\rho}(\in\mathcal{S}_{\rho,0}^m\subset \mathcal{S}_{0,0}^{m})$ in (\ref{multiplierexample}) proves Theorem \ref{sharptheorem} (1)
(or Theorem \ref{sharptheorem} (1) follows immediately from Theorem \ref{sharptheorem}  (2) and (3) for $0<p<\infty$).
Theorem \ref{sharptheorem2} (1) also follows in a similar argument.
Therefore only  the endpoint case $m-s_1+s_2=-d\big|1/2-1/p\big|$ will be considered. We will apply the randomization technique of Christ and Seeger \cite{Ch_Se} to construct counterexamples for Theorem \ref{sharptheorem} (2) and the duality argument for Theorem \ref{sharptheorem} (3). The proof of Theorem \ref{sharptheorem2} (2) is ``relatively" easier, just involving Khintchine's inequality.

This paper is organized in the following way.
Section \ref{functionspace} is dedicated to preliminaries, introducing definitions and general properties about Besov and Triebel-Lizorkin spaces. 
In Section \ref{proofmain} and \ref{proofmain2} we provide the proof of Theorem \ref{main} and \ref{main2}. In the last section we construct some examples to prove Theorem \ref{sharptheorem} and \ref{sharptheorem2}.

We make some convention on notation. Let $\mathbb{N}$ and $\mathbb{Z}$ be the collections of all natural numbers and all integers, respectively, and $\mathbb{N}_0:=\mathbb{N}\cup \{0\}$.
For the sake of simplicity we restrict ourselves in the sequel to function spaces defined on $\mathbb{R}^d$ and omit ``$\mathbb{R}^d$''. In other words, $S$, $S'$, $B_p^{s,q}$, and $F_p^{s,q} $ stand for $S(\mathbb{R}^d)$, $S'(\mathbb{R}^d)$, $B_p^{s,q}(\mathbb{R}^d)$, and $F_p^{s,q}(\mathbb{R}^d)$, respectively. Let  $\mathcal{D}$ stand for the set of all dyadic cubes in $\mathbb{R}^d$ and $\mathcal{D}_{k}$ the subset of $\mathcal{D}$ consisting of the cubes with side length $2^{-k}$ for $k\in\mathbb{Z}$. 
  For $Q\in \mathcal{D}$, denote the side length of $Q$ by $l(Q)$ and the characteristic function of $Q$ by $\chi_Q$.

\section{Preliminaries}\label{functionspace}

\subsection{Function spaces}
Let $\Phi\in S$ satisfy $Supp(\widehat{\Phi})\subset \big\{\xi\in\mathbb{R}^d:|\xi|\leq 2 \big\}$ and $\widehat{\Phi}(\xi)=1$ for $|\xi|\leq 1$. Let $\phi:=\Phi-2^{-d}\Phi(2^{-1}\cdot)$, and define $\phi_0:=\Phi$ and $\phi_k:=2^{kd}\phi(2^k\cdot)$ for $k\geq 1$.
Then $ \{\phi_k\}_{k\in\mathbb{N}_0}$  forms inhomogeneous Littlewood-Paley partition of unity.
Note that $Supp(\widehat{\phi_k})\subset \big\{\xi\in\mathbb{R}^d: 2^{k-1}\leq |\xi|\leq 2^{k+1}\big\}$ for $k\geq 1$  and $\sum_{k\in\mathbb{N}_0}{\widehat{\phi_k}}=1$.
We define a convolution operator   $\Lambda_kf:=\phi_k\ast f$ for $k\in \mathbb{N}_0$.  
Then for $s\in \mathbb{R}$ and $0<p,q\leq \infty$, $B_p^{s,q}$ and $F_p^{s,q}$ are the collection of all $f\in S'$ such that
\begin{equation*}
\Vert f\Vert_{B_p^{s,q}}:=\big\Vert \big\{ 2^{sk}\Lambda_kf\big\}_{k=0}^{\infty}\big\Vert_{l^q(L^p)}<\infty,
\end{equation*} 
\begin{equation*}
\Vert f\Vert_{F_p^{s,q}}:=\big\Vert \big\{ 2^{sk}\Lambda_kf\big\}_{k=0}^{\infty}\big\Vert_{L^p(l^q)}<\infty, \quad p<\infty,
\end{equation*} respectively.
When $p=q=\infty$ we employ $F_{\infty}^{s,\infty}=B_{\infty}^{s,\infty}$, and
$F_{\infty}^{s,q}$, $0<q<\infty$, is the collection of tempered distributions $f$ with
\begin{equation*}
\Vert f\Vert_{F_{\infty}^{s,q}}:=\Vert \Lambda_0f\Vert_{L^{\infty}}+\sup_{P\in\mathcal{D},l(P)<1}\Big(\frac{1}{|P|}\int_P{\sum_{k=-\log_2{l(P)}}^{\infty}{2^{s kq}|\Lambda_kf(x)|^q}}dx \Big)^{1/q}<\infty
\end{equation*}
where the supremum is taken over all dyadic cubes whose side length is less than $1$.

Then these spaces provide a general framework that unifies classical function spaces.
\begin{align*}
& L^p\text{space} &{F}_p^{0,2}=L^p & &1<p<\infty \\
&\text{local Hardy space} & F_p^{0,2}=h^p & &0<p<\infty\\
&\text{Sobolev space}&  {F}_p^{{s},2}=L^p_{{s}}  & & {s} >0, 1<p<\infty\\
& bmo &  \quad  {{F}}_{{\infty}}^{0,2}=bmo.
\end{align*}
Note that $L^p=h^p$ for $1<p<\infty$.

\subsection{Maximal inequalities}
Denote by $\mathcal{M}$ the Hardy-Littlewood maximal operator and let $\mathcal{M}_tu=\big(\mathcal{M}(|u|^t)\big)^{1/t}$ for $0<t<\infty$. For $r>0$ let $\mathcal{E}(r)$ denote the space of all distributions whose Fourier transforms are supported in $\{\xi:|\xi|\leq 2r\}$.
A crucial tool in theory of function spaces is a maximal operator introduced by Peetre \cite{Pe}.
For $r>0$ and $\sigma>0$ define 
\begin{equation*}
\mathfrak{M}_{\sigma,r}u(x)=\sup_{y\in\mathbb{R}^d}{\dfrac{|u(x+y)|}{(1+r|y|)^{\sigma}}}.
\end{equation*} 
As shown in \cite{Pe}, one has the majorization \begin{equation*}
\mathfrak{M}_{\sigma,r}u(x)\lesssim \mathcal{M}_tu(x)
\end{equation*} for all $\sigma\geq d/{t}$ if $u\in \mathcal{E}(r)$.
These estimates imply the following maximal inequality via the Fefferman-Stein's vector-valued inequality in \cite{Fe_St}.
Suppose $0<p<\infty$ and $0<q\leq \infty$. Then for $k\in\mathbb{Z}$ and $A>0$ one has
\begin{equation}\label{max}
\Big\Vert  \Big(\sum_{k}{(\mathfrak{M}_{\sigma,2^k}u_k)^q}\Big)^{1/{q}} \Big\Vert_{L^p} \lesssim_A  \Big\Vert \Big( \sum_{k}{|u_k|^q}  \Big)^{1/{q}}  \Big\Vert_{L^p} ~\text{for}~ \sigma>\max{\big\{d/p,d/q\big\}}
\end{equation} if $u_k\in \mathcal{E}(A2^k)$.
Moreover,  it is proved in \cite{Park2} that
for $\mu\in\mathbb{Z}$, $P\in\mathcal{D}_{\mu}$, and $A>0$ one has
\begin{equation}\label{inftymaximal}
\Big( \frac{1}{|P|}\int_P{\sum_{k=\mu}^{\infty}{\big(\mathfrak{M}_{\sigma,2^k}u_k(x)\big)^q}}dx\Big)^{1/q}\lesssim_A\sup_{R\in\mathcal{D}_{\mu}}{\Big(\frac{1}{|R|}\int_R{\sum_{k=\mu}^{\infty}{|u_k(x)|^q}}dx\Big)^{1/q}} ~\text{for}~ \sigma>d/q
\end{equation} where the constant in the inequality is independent of $\mu$ and $P$.
The condition $\sigma>\max{(d/p,d/q)}$ in (\ref{max}) and (\ref{inftymaximal}) is necessary for the inequalities to hold. We refer the reader to \cite{Ch_Se, Park2} for more details.

\subsection{$\varphi$-transform of $F$-spaces}\label{decomposition}
Suppose $0<p<\infty$, $0<q\leq\infty$, and $s\in\mathbb{R}$. 
For a sequence of complex numbers $b=\{b_Q\}_{Q\in\mathcal{D}, l(Q)\leq 1}$ we define 
\begin{equation*}
\Vert b \Vert_{f_p^{s,q}}:=\big\Vert  g^{s,q}(b)  \big\Vert_{L^p}, \quad 0<p<\infty ~\text{ or }~ p=q=\infty
\end{equation*}
where
\begin{equation*}
g^{s,q}(b)(x):=\Big(\sum_{Q\in\mathcal{D}, l(Q)\leq 1}{\big(|Q|^{-s/{d}-1/2}|b_Q|\chi_Q(x)\big)^q}\Big)^{1/q}.
\end{equation*} 
Then Triebel-Lizorkin space $F_p^{s,q}$ can be characterized by discrete function space $f_p^{s,q}$.
For $c>0$ let  $\vartheta_0,\vartheta, \widetilde{\vartheta}_0, \widetilde{\vartheta} \in\mathcal{S}$ satisfy 
\begin{equation*}
Supp(\widehat{\vartheta}_0), Supp(\widehat{\widetilde{\vartheta}_0})\subset \{\xi : |\xi|\leq 2\}, 
\end{equation*}
\begin{equation*}
Supp(\widehat{\vartheta}), Supp(\widehat{\widetilde{\vartheta}})\subset \{\xi : 1/{2}\leq |\xi|\leq 2\}
\end{equation*}
\begin{equation*}
|\widehat{\vartheta_0}(\xi)|, |\widehat{\widetilde{\vartheta}_0}(\xi)| \geq c>0 ~\text{for}~ |\xi|\leq 5/{3}
\end{equation*}
\begin{equation*}
|\widehat{\vartheta}(\xi)|, |\widehat{\widetilde{\vartheta}}(\xi)| \geq c>0 ~\text{for}~ 3/4\leq |\xi|\leq 5/3
\end{equation*}
 and 
 \begin{equation*}
 \sum_{k=0}^{\infty}{\overline{\widehat{\widetilde{\vartheta}_k}(\xi)}\widehat{\vartheta_k}(\xi)}=1
 \end{equation*} where $\vartheta_k(x)=2^{kd}\vartheta(2^kx)$  and $\widetilde{\vartheta}_k(x)=2^{kd}\widetilde{\vartheta}(2^kx)$ for $k\geq 1$.
For each $Q\in\mathcal{D}$ let $x_Q$ be the lower left corner of $Q$.
 Every $f\in F_p^{s,q}$ can be decomposed as 
\begin{equation}\label{decomposition1}
f(x)=\sum_{Q\in\mathcal{D}, l(Q)\leq 1}{v_Q\vartheta^Q(x)} 
\end{equation} where $\vartheta^Q(x):=|Q|^{1/2}\vartheta_k(x-x_Q)$, $\widetilde{\vartheta}^Q(x):=|Q|^{1/2}\widetilde{\vartheta}_k(x-x_Q)$ for $Q\in\mathcal{D}_k$, and $v_Q:=\langle f,\widetilde{\vartheta}^Q\rangle$.
Moreover, in this case one has \begin{equation*}
\Vert v   \Vert_{f_p^{s,q}} \lesssim \Vert  f  \Vert_{F_p^{s,q}}.
\end{equation*}
The converse estimate also holds.  For any sequence $v=\{v_Q\}_{Q\in\mathcal{D}}$ of complex numbers satisfying $\Vert  v \Vert_{f_p^{s,q}}<\infty$,  
\begin{equation*}
f(x):=\sum_{Q\in\mathcal{D}, l(Q)\leq 1}{v_Q\vartheta^Q(x)}
\end{equation*} belongs to $F_p^{s,q}$ and 
\begin{equation*}
\Vert  f  \Vert_{F_p^{s,q}} \lesssim \Vert  v \Vert_{f_p^{s,q}}.
\end{equation*}
See \cite{Fr_Ja, Fr_Ja1, Fr_Ja2} for more details.

\subsection{Atomic decomposition of ${f}_p^{s,q}$ by nonsmooth $\infty$-atoms }

Let $0<p\leq 1$, $0< q\leq \infty$, and $s\in\mathbb{R}$. A sequence of complex numbers $r=\{r_Q\}_{\substack{Q\in\mathcal{D}\\l(Q)\leq 1}}$ is called a nonsmooth $\infty$-atom for $f_p^{s,q}$ if there exists a dyadic cube $Q_0$ such that 
\begin{equation*}
r_Q=0 \quad \text{if}\quad Q \not\subset Q_0
\end{equation*}
 and \begin{equation*}
\big\Vert  g^{s,q}(r)  \big\Vert_{L^{\infty}}\leq |Q_0|^{-{1}/{p}}.
\end{equation*}
Then we will use the following atomic decomposition of $f_p^{s,q}$ as a substitute of the atomic decomposition of $h^p$ for $0<p\leq 1$.
\begin{lemma}\label{decomhardy}
Suppose $0<p\leq 1$, $p\leq q\leq\infty$, and $b=\{b_Q\}_{Q\in\mathcal{D},l(Q)\leq1}\in f_p^{s,q}$. Then there exist $C_{d,p,q}>0$, a sequence of scalars $\{\lambda_j\}$, and a sequence of $\infty$-atoms $r_j=\{r_{j,Q}\}_{Q\in\mathcal{D}, l(Q)\leq 1}$ for $f_p^{s,q}$ such that 
\begin{equation*}
b=\{b_Q\}=\sum_{j=1}^{\infty}{\lambda_j\{r_{j,Q}\}}=\sum_{j=1}^{\infty}{\lambda_j r_j}
\end{equation*} and  
\begin{equation*}
\Big(\sum_{j=1}^{\infty}{|\lambda_j|^p}\Big)^{{1}/{p}}\leq C_{d,p,q}\big\Vert   b\big\Vert_{f_{p}^{s,q}}.
\end{equation*}
Moreoever, 
\begin{equation*}
\big\Vert  b  \big\Vert_{f_p^{s,q}}\approx \inf{\Big\{ \Big(\sum_{j=1}^{\infty}{|\lambda_j|^p}\Big)^{{1}/{p}}   :  b=\sum_{j=1}^{\infty}{\lambda_j r_j} ,~ r_j ~\text{is an $\infty$-atom for $f_p^{s,q}$}    \Big\}}.
\end{equation*}

\end{lemma}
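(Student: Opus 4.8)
\textbf{Proof proposal for Lemma \ref{decomhardy}.}

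The plan is to mimic the classical atomic decomposition of $h^p$ (or of $f_p^{s,q}$ for $p\le 1$) via a Calder\'on--Zygmund--type stopping-time argument applied to the ``size function'' $g^{s,q}(b)$, the analogue of the grand maximal function. First I would observe that since $b\in f_p^{s,q}$ we have $g:=g^{s,q}(b)\in L^p$, so for each $i\in\mathbb Z$ the level set $\Omega_i:=\{x:g(x)>2^i\}$ is open and of finite measure, and $\sum_{i}2^{ip}|\Omega_i|\approx \|g\|_{L^p}^p=\|b\|_{f_p^{s,q}}^p$. For each dyadic cube $Q$ with $l(Q)\le 1$ assign it to the generation $i=i(Q)$ determined by $Q\subset \Omega_i$ but $Q\not\subset\Omega_{i+1}$; because $g^{s,q}(b)(x)\ge |Q|^{-s/d-1/2}|b_Q|$ for $x\in Q$, only finitely many cubes ($Q$ with $b_Q\ne 0$) are missed, and every $Q$ with $b_Q\ne0$ lands in some generation. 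Then write $b=\sum_i b^{(i)}$ where $b^{(i)}=\{b_Q: i(Q)=i\}$; the set of cubes in generation $i$ is contained in $\Omega_i$, which we decompose into its maximal dyadic subcubes $\{Q_{i,k}\}_k$, and we further split $b^{(i)}=\sum_k b^{(i,k)}$ where $b^{(i,k)}$ collects those $Q$ of generation $i$ contained in $Q_{i,k}$.

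Next I would normalize each piece into an $\infty$-atom. Set $\lambda_{i,k}:=2^i|Q_{i,k}|^{1/p}$ and $r_{i,k}:=\lambda_{i,k}^{-1}b^{(i,k)}$; by construction $r_{i,k}$ is supported (in the index sense) on cubes contained in $Q_{i,k}$, so the key point is the uniform bound $\|g^{s,q}(r_{i,k})\|_{L^\infty}\le |Q_{i,k}|^{-1/p}$, i.e. $\|g^{s,q}(b^{(i,k)})\|_{L^\infty}\le 2^i$. For a point $x$, $g^{s,q}(b^{(i,k)})(x)^q$ is a sum over cubes $Q\ni x$ of generation $i$ inside $Q_{i,k}$; each such $Q$ is \emph{not} contained in $\Omega_{i+1}$, so there is a point $y_Q\in Q\setminus\Omega_{i+1}$ with $g(y_Q)\le 2^{i+1}$, and since the cubes $Q\ni x$ of a fixed dyadic generation inside $Q_{i,k}$ are nested, a standard comparison (all these $Q$ contain some common larger cube $Q^*\ni x$ with $g^{s,q}(b^{(i,k)})(x)\le g^{s,q}(b)(y_{Q^*})$ up to handling only the relevant generation) yields $g^{s,q}(b^{(i,k)})(x)\lesssim 2^i$. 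Absorbing the implicit constant into $\lambda_{i,k}$ (this is why a constant $C_{d,p,q}$ appears) makes $r_{i,k}$ a genuine $\infty$-atom. Finally $\sum_{i,k}|\lambda_{i,k}|^p=\sum_i 2^{ip}\sum_k|Q_{i,k}|=\sum_i 2^{ip}|\Omega_i|\lesssim \|b\|_{f_p^{s,q}}^p$, which gives the desired $\ell^p$ bound on the coefficients; together with convergence of $\sum_{i,k}\lambda_{i,k}r_{i,k}$ to $b$ in $f_p^{s,q}$ (immediate from the $L^p$ convergence of $\sum_i g^{s,q}(b^{(i)})^q$-type tails, using $p\le q$ so that $\ell^q\hookrightarrow$ fits the nesting estimate) this proves the first assertion.

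For the ``Moreover'' part, the inequality $\|b\|_{f_p^{s,q}}\lesssim \inf(\sum|\lambda_j|^p)^{1/p}$ over all atomic representations follows from the reverse direction: if $b=\sum_j\lambda_j r_j$ with $r_j$ an $\infty$-atom supported on $Q_0^{(j)}$, then by the $p$-triangle inequality ($p\le1$) $\|b\|_{f_p^{s,q}}^p\le \sum_j|\lambda_j|^p\|r_j\|_{f_p^{s,q}}^p$, and one checks $\|r_j\|_{f_p^{s,q}}=\|g^{s,q}(r_j)\|_{L^p}\le |Q_0^{(j)}|^{1/p}\cdot\|g^{s,q}(r_j)\|_{L^\infty}\le 1$ because $g^{s,q}(r_j)$ is supported in (a fixed dilate of) $Q_0^{(j)}$ and bounded by $|Q_0^{(j)}|^{-1/p}$ by \eqref{infdef}. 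Combined with the first part this gives the equivalence. The main obstacle I anticipate is the pointwise atom estimate $\|g^{s,q}(b^{(i,k)})\|_{L^\infty}\lesssim 2^i$: one must be careful that the $\ell^q$ sum defining $g^{s,q}$ over all generations collapses correctly when restricted to a \emph{single} generation-$i$ layer, and that the stopping cubes $Q_{i,k}$ really do capture the support of $b^{(i)}$ without overlap; handling the finitely many cubes of generation $-\infty$ (those $Q$ with $b_Q\ne0$ not contained in any $\Omega_i$, which cannot happen here since $|\Omega_i|<\infty$ forces every such $Q$ into a finite generation) and the normalization constant $C_{d,p,q}$ are the delicate bookkeeping points.
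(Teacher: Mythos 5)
Your proof proposal is correct and follows the standard Calder\'on--Zygmund stopping-time decomposition that appears in the references the paper cites for this lemma (Frazier--Jawerth; Grafakos, Section 6.6.3): level sets $\Omega_i=\{g^{s,q}(b)>2^i\}$, maximal dyadic subcubes $Q_{i,k}$, assignment of each $Q$ with $b_Q\neq 0$ to the generation $i(Q)$, and the coefficient bound $\sum_{i,k}|\lambda_{i,k}|^p\lesssim\sum_i 2^{ip}|\Omega_i|\approx\|b\|_{f_p^{s,q}}^p$. One wording slip worth fixing in the key estimate $\|g^{s,q}(b^{(i,k)})\|_{L^\infty}\lesssim 2^i$: the auxiliary cube $Q^*$ should be the \emph{smallest} cube of the nested chain $\{Q\ni x: i(Q)=i,\ Q\subset Q_{i,k}\}$ (or taken along a finite truncation, then pass to the limit), not a ``common larger cube''; then a point $y\in Q^*\setminus\Omega_{i+1}$ lies in \emph{every} cube of the chain, so the whole $\ell^q$ sum is dominated by $g^{s,q}(b)(y)\le 2^{i+1}$.
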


The proof of the above lemma can be found in \cite[Chapter 7]{Fr_Ja}, \cite{Fr_Ja2}, and \cite[Chapter 2.3.4]{Gr}.

\subsection{Characterization of a vector-valued function space by using a sharp maximal function for $0<q<p<\infty$}

Given a locally integrable function $f$ on $\mathbb{R}^d$ the Fefferman-Stein sharp maximal function $f^{\sharp}$ is defined by 
\begin{equation*}
f^{\sharp}(x):=\sup_{Q:x\in Q}\frac{1}{|Q|}\int_Q{|f(y)-f_Q|}dy
\end{equation*} where $f_Q:=\frac{1}{|Q|}\int_Q{f(z)}dz$ and the supremum is taken over all cubes $Q$ containing $x$. Then a fundamental inequality of Fefferman and Stein \cite{Fe_St1} says that 
for $1<p<\infty$, $1\leq p_0\leq p$ if $f\in L^{p_0}(\mathbb{R}^d)$  then we have 
\begin{equation*}
\Vert \mathcal{M}f\Vert_{L^p}\lesssim_p \Vert f^{\sharp}\Vert_{L^p}.
\end{equation*}

By following the proof of the above estimate in \cite{Fe_St1} one can actually replace the maximal functions by dyadic maximal ones.
For locally integrable function $f$ we define
 the dyadic maximal function 
\begin{equation*}
\mathcal{M}^{(d)}f(x):=\sup_{Q\in \mathcal{D},x\in Q}{\frac{1}{|Q|}\int_Q{|f(y)|}dy},
\end{equation*} 
and the dyadic sharp maximal funtion
\begin{equation*}
\mathcal{M}^{\sharp}f(x):= \sup_{Q\in \mathcal{D}:x\in Q }{\frac{1}{|Q|}\int_Q{|f(y)-f_Q|}dy}
\end{equation*} where the supremums are taken over all dyadic cubes $Q$ containing $x$.
Then for $1<p<\infty$, $1\leq p_0\leq p$, and $f\in L^{p_0}$ one has
\begin{equation}\label{sharpmaximalin}
\Vert \mathcal{M}^{(d)}f\Vert_{L^p}\lesssim_p \Vert \mathcal{M}^{\sharp}f\Vert_{L^p}.
\end{equation}

The next lemma states a pointwise estimate of sharp maximal functions, which is a slight modification of Lemma 6.4 in \cite{Se2}.
For $n\in\mathbb{N}$ and a sequence of functions $\{g_k\}_{k\in\mathbb{N}}$ let
\begin{equation*}
\mathcal{N}_{q}^{\sharp,n}\big( \{g_k\}_{k\in\mathbb{N}}\big)(x):=\sup_{P:x\in P\in\mathcal{D}}{\Big(\dfrac{1}{|P|}\int_P{\sum_{k=\max(n,-\log_2{l(P)})}^{\infty} |g_k(y)|^q}dy \Big)^{1/q}}.
\end{equation*}
\begin{lemma}\label{lemmasharp}
Let $0<q<\infty$, $\sigma>2d/q$, and $n\in\mathbb{N}$. Suppose $g_k\in\mathcal{E}(A2^{k})$ for each $k\in\mathbb{N}$. Then 
\begin{equation*}
\mathcal{N}_q^{\sharp,n}\big(\{\mathfrak{M}_{\sigma,2^k}{g_k}\}_{k\in\mathbb{N}}\big)(x)\lesssim_{\sigma,q,A}  \mathcal{N}_q^{\sharp,n}\big(\{{g_k}\}_{k\in\mathbb{N}}\big)(x).
\end{equation*}

\end{lemma}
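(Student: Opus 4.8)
The plan is to follow the scheme of the proof of \cite[Lemma 6.4]{Se2}; the only new feature here is the lower truncation $n$ in the summation index $\max(n,-\log_2 l(P))$, and it is harmless because in every cube encountered in the argument this lower index stays $\le\max(n,\mu)$. We may assume $\mathcal{N}_q^{\sharp,n}(\{g_k\})(x)<\infty$. Unwinding the definition of $\mathcal{N}_q^{\sharp,n}$, it suffices to prove, uniformly over dyadic cubes $P\ni x$ with $l(P)=2^{-\mu}$ and with $\ell_0:=\max(n,\mu)$, the bound
\[
\frac{1}{|P|}\int_P\sum_{k\ge\ell_0}\big(\mathfrak{M}_{\sigma,2^k}g_k(y)\big)^q\,dy\ \lesssim\ \big(\mathcal{N}_q^{\sharp,n}(\{g_k\})(x)\big)^q .
\]
The first step is a Peetre-type shell decomposition: for $y\in P$ and $k\ge\mu$ one has $2^k|z|\ge 2^{k-\mu}2^i\ge 2^i$ as soon as $|z|\ge 2^{i-\mu}$, so splitting the supremum over the shells $\{2^{i-\mu}\le |z|<2^{i+1-\mu}\}$ and raising to the power $q$,
\[
\big(\mathfrak{M}_{\sigma,2^k}g_k(y)\big)^q\ \lesssim\ \sum_{i\ge0}2^{-c_0\sigma q i}\Big(\sup_{w\in B(y,\,2^{i+1-\mu})}|g_k(w)|\Big)^q,
\]
with $c_0=1$ when $q\le1$ and $c_0$ any fixed number $<1$ when $q>1$; crucially the right-hand side no longer depends on $y\in P$.

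For each $i$ one exploits that $g_k\in\mathcal{E}(A2^k)$ with $k\ge\mu$. By the sub-mean value property of band-limited functions at their own scale $2^{-k}$ — equivalently Peetre's majorization $\mathfrak{M}_{\sigma,2^k}g_k\lesssim\mathcal{M}_q g_k$, valid for $\sigma\ge d/q$ — the supremum of $|g_k|$ over $B(y,2^{i+1-\mu})$ is controlled by $L^q$ averages of $g_k$ over cubes of side comparable to $2^{i-\mu}$ contained in a fixed dilate of that ball; such cubes have scale at most $\mu\le\ell_0\le k$, so $k$ remains admissible for the truncated sum attached to each of them. Covering the (non-dyadic) dilate of $B(y,2^{i+1-\mu})$ by a bounded number of dyadic cubes — here one pays, via volume ratios, for the fact that a concentric ball around $P$ need not lie inside a dyadic cube of comparable size that contains $x$ — and summing in $k$, one is led to an estimate of the shape
\[
\frac{1}{|P|}\int_P\sum_{k\ge\ell_0}\big(\mathfrak{M}_{\sigma,2^k}g_k(y)\big)^q\,dy\ \lesssim\ \Big(\sum_{i\ge0}2^{-c_0\sigma q i}\,2^{c_1 i d}\Big)\big(\mathcal{N}_q^{\sharp,n}(\{g_k\})(x)\big)^q .
\]
The series converges exactly when $c_0\sigma q>c_1 d$, and a careful accounting of the losses — the volume growth of the dilated cubes together with the further loss incurred in passing from the pointwise supremum to $L^q$ averages through the band-limited estimate — shows the admissible range to be precisely $\sigma>2d/q$, the hypothesis of the lemma.

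The point demanding real care, and hence the main obstacle, is this middle step: turning the pointwise supremum of the band-limited piece $g_k$ over the large, non-dyadic ball $B(y,2^{i+1-\mu})$ into a quantity that the \emph{discrete} sharp maximal function $\mathcal{N}_q^{\sharp,n}$ — a supremum taken only over dyadic cubes containing $x$ — genuinely controls, while keeping the loss small enough to be absorbed by the decay $2^{-c_0\sigma q i}$. One cannot simply majorize that supremum by the $L^q$ average of $g_k$ over a single dilate of $B(y,2^{i+1-\mu})$ (this already fails once $2^{-\mu}\gg 2^{-k}$), so one must route through the fine-scale sub-mean value inequality and organize the resulting double sum in $k$ and in the fine cubes with care; this is exactly what produces the threshold $\sigma>2d/q$, in contrast with $\sigma>d/q$ in the unweighted $F_\infty$-maximal inequality (\ref{inftymaximal}). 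Once this estimate is in hand, the reduction to a single cube, the bounded covering of the non-dyadic dilates, and the bookkeeping with the truncation $n$ are all routine.
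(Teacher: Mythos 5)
Your overall plan — a Peetre-type shell decomposition of $\mathfrak{M}_{\sigma,2^k}g_k(y)$, conversion of pointwise suprema to $L^q$ averages via the sub-mean value property of band-limited functions, then summation in $k$ and $i$ controlled by $\mathcal{N}_q^{\sharp,n}$ — is the right template, and it is indeed what \cite[Lemma 6.4]{Se2} and the paper both do. But there is a concrete error in your first step that the rest of the argument cannot recover from, and it arises precisely at the point you yourself flag as the main obstacle.

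You split into shells at the \emph{cube} scale, $|z|\in[2^{i-\mu},2^{i+1-\mu})$, and then weaken $(1+2^k|z|)^{-\sigma}\lesssim 2^{-\sigma(i+k-\mu)}$ to $\lesssim 2^{-\sigma i}$, discarding the extra factor $2^{-\sigma(k-\mu)}$. This discarded factor is exactly what is needed later. Indeed, controlling $\sup_{w\in B(y,2^{i+1-\mu})}|g_k(w)|$ for a band-limited $g_k\in\mathcal{E}(A2^k)$ by an $L^q$-average over a comparable ball incurs an unavoidable normalization mismatch: the sub-mean value inequality holds at scale $2^{-k}$, so
\[
\sup_{w\in B(y,r)}|g_k(w)|\ \lesssim\ \big(r\,2^{k}\big)^{d/q}\Big(\frac{1}{|B(y,2r)|}\int_{B(y,2r)}|g_k|^{q}\Big)^{1/q},
\]
and with $r=2^{i+1-\mu}$ this is a loss of order $2^{(i+k-\mu)d/q}$. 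Raising to the $q$-th power, the $k$-dependent part of the loss is $2^{(k-\mu)d}$, and after you drop $2^{-\sigma q(k-\mu)}$ nothing remains to beat it: the sum over $k\geq\ell_0\geq\mu$ diverges. The threshold $\sigma>2d/q$ that you announce in the next paragraph is not reached by the bookkeeping you set up; it is obtained only if the $2^{-\sigma q(k-\mu)}$ gain is retained, and once you retain it and substitute $l=i+(k-\mu)$ you find yourself redoing the shells at the \emph{function} scale $|z|\approx 2^{-k+l}$, which is precisely the paper's decomposition.

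The paper's version of the pointwise claim (its display (\ref{newclaim})) is worth comparing explicitly: for $y\in P$ with $l(P)\geq 2^{-k}$ and any $t>0$,
\[
\mathfrak{M}_{\sigma,2^k}g_k(y)\ \lesssim\ \sum_{l\geq 0}2^{-l(\sigma-d/t)}\,\mathcal{M}_t\big(\chi_{2^{l+3}P}\,g_k\big)(y).
\]
Here the shells are taken at scale $2^{-k+l}$, so the Peetre decay $2^{-l\sigma}$ and the sub-mean value normalization mismatch $2^{ld/t}$ are indexed by the \emph{same} $l$ and cancel cleanly; the hypothesis $l(P)\geq 2^{-k}$ is what makes $B(y,2^{-k+l+1})\subset 2^{l+3}P$ for $y\in P$. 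One then picks $t<q$, uses the $L^q$-boundedness of $\mathcal{M}_t$, pays $2^{ld}$ for the volume ratio $|2^{l+3}P|/|P|$, and convergence in $l$ forces $\sigma>d/t+d/q$, which tends to $2d/q$ as $t\uparrow q$. To repair your argument you would either have to keep the $2^{-\sigma(k-\mu)}$ factor throughout (which reduces to this), or prove the localized pointwise bound (\ref{newclaim}) directly.

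Two smaller remarks. First, the parenthetical ``equivalently Peetre's majorization $\mathfrak{M}_{\sigma,2^k}g_k\lesssim\mathcal{M}_qg_k$'' does not actually give control on $\sup_{B(y,2^{i+1-\mu})}|g_k|$; that majorization already has the decaying weight built in, whereas you have stripped it off. Second, the dyadic-vs-non-dyadic covering concern you raise is real but is not the decisive issue here; it only costs an additional bounded power of $2^{l}$ (or $2^{i}$), which can be absorbed by taking $t$ slightly smaller than $q$ as in the paper's choice of $\epsilon<1$.
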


\begin{proof}
We may assume $A=1$ without loss of generality.
We claim that for each $k\in\mathbb{N}$, $P\in\mathcal{D}$ with $l(P)\geq 2^{-k}$, and any $t>0$
\begin{equation}\label{newclaim}
\mathfrak{M}_{\sigma,2^k}g_k(y)\lesssim_{\sigma,t} \sum_{l=0}^{\infty}{2^{-l(\sigma-d/t)}\mathcal{M}_t\big( \chi_{2^{l+3}P}g_k\big)(y)}, \quad y\in P
\end{equation} where $2^{l+3}P$ stands for a dilate of $P$ by a factor of $2^{l+3}$ with the same center.
Once we have (\ref{newclaim}), by choosing  $0<t<q$ so that $\sigma>d/t+d/q>2d/q$, it follows that 
\begin{align*}
&\mathcal{N}_q^{\sharp,n}\big(\{\mathfrak{M}_{\sigma,2^k}{g_k}\}\big)(x)\\
&\lesssim\sup_{P:x\in P\in\mathcal{D}}{\Big(\dfrac{1}{|P|}\int_P\sum_{k=\max(n,-\log_2{l(P)})}^{\infty}{\Big(\sum_{l=0}^{\infty}{2^{-l(\sigma-d/t)}\mathcal{M}_t\big(\chi_{2^{l+3}P}g_k\big)(y)}  \Big)^q}dy \Big)^{1/q}}\\
&\lesssim \sup_{P:x\in P\in\mathcal{D}}{\Big(\sum_{l=0}^{\infty}{2^{-lq\epsilon(\sigma-d/t)}\dfrac{1}{|P|}\sum_{k=\max(n,-\log_2{l(P)})}^{\infty}{\big\Vert \mathcal{M}_t(\chi_{2^{l+3}P}g_k)  \big\Vert_{L^q}^q}   } \Big)^{1/q}}\\
&\lesssim \Big(\sum_{l=0}^{\infty}{2^{-lq(\epsilon(\sigma-d/t)-d/q)}} \Big)^{p/q}\sup_{R:x\in R\in\mathcal{D}}\Big(\dfrac{1}{|R|}\int_R{\sum_{k=\max(n,-\log_2{l(R)})}^{\infty}{|g_k(y)|^q}}dy \Big)^{1/q}\\
&\lesssim \mathcal{N}_q^{\sharp,n}\big(\{{g_k}\}\big)(x)
\end{align*}
for $0<\epsilon<1$ satisfying $\epsilon(\sigma-d/t)>d/q$ where the second inequality follows from $l^q\subset l^1$ if $q\leq 1$ or from H\"older's inequality if $q>1$, and the third one follows from the $L^q$-boundedness of $\mathcal{M}_t$ and the fact that $-\log_2{l(2^{l+3}P)}\leq -\log_2{l(P)}$.

Let us prove (\ref{newclaim}). Let $P\in\mathcal{D}$ with $l(P)\geq 2^{-k}$ and $y\in P$.
By using Peetre's mean value inequality in \cite{Pe}
 we see that for all $t>0$ and sufficiently small $\delta>0$
\begin{equation*}
\mathfrak{M}_{\sigma,2^k}g_k(y)\lesssim_{\delta,t} \sup_{z\in\mathbb{R}^d}{\dfrac{1}{(1+2^k|z|)^{\sigma}}\Big(\dfrac{1}{2^{-kd}}\int_{|u|<2^{-k}\delta}{\big|g_k(y-z-u) \big|^t}du \Big)^{1/t}}
\end{equation*} and this is bounded by
\begin{equation*} 
  \sum_{l=0}^{\infty}{2^{-l\sigma}\sup_{|z|\leq 2^{-k+l}}{\Big(\dfrac{1}{2^{-kd}}\int_{|y-z-u|<2^{-k}\delta}{\big|g_k(u) \big|^t}du \Big)^{1/t}}}.
\end{equation*}
We observe that the supremum in the sum is less than
\begin{equation*}
2^{ld/t}{\Big( \dfrac{1}{2^{(-k+l)d}}\int_{|y-u|<2^{-k+l+1}}{\big|g_k(u) \big|^t}du\Big)^{1/t}}\lesssim 2^{ld/t}\mathcal{M}_t\big(\chi_{2^{l+3}P}g_k \big)(y)
\end{equation*} for $|z|\leq 2^{-k+l}$ and  this  proves (\ref{newclaim}).
\end{proof}

Now we have the following characterization of a vector-valued function space $L^p(l^q)$ for $0<q<p<\infty$, which is an inhomogeneous analogue of \cite[Proposition 6.1, 6.2]{Se2}.
\begin{lemma}\label{54}
Let $0<q<p<\infty$, $n\in\mathbb{N}$, and $A>0$. Suppose $g_k\in \mathcal{E}(A2^k)$ for each $k\geq n$. Then
\begin{equation*}
\big\Vert \big\{ g_k\big\}_{k=n}^{\infty}\big\Vert_{L^p(l^q)}\approx_{n,A} \big\Vert \mathcal{N}_q^{\sharp,n}\big( \{g_k\}\big)  \big\Vert_{L^p}.
\end{equation*}

\end{lemma}

\begin{proof}
Since \begin{equation*}
\mathcal{N}_q^{\sharp,n}\big(\{g_k\}\big)(x)\lesssim \mathcal{M}_q\big(\Vert \{g_k\}\Vert_{l^q} \big)(x)
\end{equation*} the inequality $``\gtrsim"$ follows from the $L^p$ boundedness of $\mathcal{M}_q$.

For the opposite direction, we apply  (\ref{sharpmaximalin}) with $p/q>1$ and then
\begin{equation}\label{basicestt}
\big\Vert \big\{ g_k\big\}_{k=n}^{\infty}\big\Vert_{L^p(l^q)}\lesssim \Big\Vert \mathcal{M}^{\sharp}\Big(\sum_{k=n}^{\infty}{| g_k|^q} \Big)\Big\Vert_{L^{p/q}}^{1/q}.
\end{equation}
We see that
\begin{align}
\mathcal{M}^{\sharp}\Big(\sum_{k=n}^{\infty}{|g_k|^q}\Big)(x)
&\leq \sup_{x\in P\in\mathcal{D}}{\Big(\frac{1}{|P|}\int_P\frac{1}{|P|}\int_P{\sum_{k=n}^{\infty}{\big|g_k(y)-g_k(z)\big|^q}}dzdy \Big)}\nonumber\\
&\lesssim \big(\mathcal{N}_{q}^{\sharp,n}\big(\{g_k\}\big)(x) \big)^{q}\nonumber\\
&\relphantom{=}+ \sup_{x\in P\in\mathcal{D}}{\Big(\frac{1}{|P|}\int_P\frac{1}{|P|}\int_P{\sum_{k=n}^{-\log_2{l(P)-1}}{\big|g_k(y)-g_k(z)\big|^q}}dzdy \Big)} \label{smallkterm}.
\end{align}
If $l(P)\leq 2^{-k-1}$ then there exists the unique dyadic cube $Q_P\in\mathcal{D}_{k+1}$ containing $P$. Then, by using Taylor's formula, one obtains that
(\ref{smallkterm}) is less than a constant times
\begin{equation*}
\sup_{x\in P\in\mathcal{D}}{\Big( \sum_{k=n}^{-\log_2{l(P)}-1}{\big( 2^kl(P)\big)^q\big( \sup_{w\in Q_P}{|\psi_k|\ast |g_k|(w)}\big)^q}\Big)}
\end{equation*}
for some $\psi_k\in S$ with $Supp(\widehat{\psi_k})\subset \big\{\xi\in\mathbb{R}^d:|\xi|\lesssim 2^k \big\}$.
Observe that for any $\sigma>0$ 
\begin{align*}
\sup_{w\in Q_P}{|\psi_k|\ast |g_k|(w)}&\lesssim_{\sigma}\inf_{w\in Q_P}{\mathfrak{M}_{\sigma,2^k}\big(|\psi_k|\ast |g_k|\big)(w)}\\
 &\lesssim \inf_{w\in Q_P}{\mathfrak{M}_{\sigma,2^k}\big( \mathfrak{M}_{\sigma,2^k}g_k\big)(w)}\lesssim \inf_{w\in Q_P}{\mathfrak{M}_{\sigma,2^k}g_k(w)}
\end{align*}
and this yields that 
\begin{align*}
(\ref{smallkterm})&\lesssim \sup_{x\in P\in\mathcal{D}}\Big( \sum_{k=n}^{-\log_2{l(P)}-1}{\big( 2^kl(P)\big)^q\Big(\inf_{w\in Q_P}{\mathfrak{M}_{\sigma,2^k}g_k(w)}\Big)^q}\Big)\\
 &\lesssim \sup_{x\in P\in\mathcal{D}}{\sup_{k\geq n}{\inf_{w\in Q_P}{\big(\mathfrak{M}_{\sigma,2^k}g_k(w) \big)^q}}}\\
  &\lesssim \sup_{x\in P\in\mathcal{D}}{\sup_{k\geq n}{\Big(\frac{1}{|Q_P|}\int_{Q_P}{\big( \mathfrak{M}_{\sigma,2^k}g_k(w)\big)^q}dw \Big)}}\\
  &\lesssim \mathcal{N}_q^{\sharp,n}\big( \{\mathfrak{M}_{\sigma,2^k}g_k \}\big)(x).
\end{align*}
By using Lemma \ref{lemmasharp} with $\sigma>2d/q$ one obatins
\begin{equation*}
\mathcal{M}^{\sharp}\Big(\sum_{k=n}^{\infty}{|g_k|^q}\Big)(x)\lesssim \big(\mathcal{N}_{q}^{\sharp,n}\big(\{g_k\}\big)(x) \big)^{q},
\end{equation*}
which completes the proof with (\ref{basicestt}).
\end{proof}


\section{Proof of Theorem \ref{main}}\label{proofmain}

The proof is based on the paradifferential technique as in \cite{Park}.
Let \begin{equation*}
a_{j,k}(x,\xi)=
\begin{cases}
\phi_j\ast a(\cdot,\xi)(x)\widehat{\phi_k}(\xi) \quad &
\quad \quad j,k\geq 0
\\
0 \quad &
\quad  otherwise.
\end{cases}
\end{equation*}
Write
\begin{align*}
a(x,\xi)    &=\sum_{j=3}^{\infty}{\sum_{k=0}^{j-3}{a_{j,k}(x,\xi)}}+\sum_{k=0}^{\infty}{\sum_{j=k-2}^{k+2}{a_{j,k}(x,\xi)}}+\sum_{k=3}^{\infty}{\sum_{j=0}^{k-3}{a_{j,k}(x,\xi)}}\\
    &=:a^{(1)}(x,\xi)+a^{(2)}(x,\xi)+a^{(3)}(x,\xi).
\end{align*}
Note that $a^{(j)}\in \mathcal{S}_{0,0}^{m}$ for each $j=1,2,3$.

It was already proved in \cite{Park} that for any $s,m\in\mathbb{R}$ and $0<p,t\leq\infty$  we have
\begin{equation}\label{strongerest}
\Vert T_{[a^{(j)}]}f\Vert_{F_p^{0,t}}\lesssim \Vert f\Vert_{F_p^{s,t}}, \quad j=1,2,
\end{equation} 
which clearly implies 
\begin{equation*}
T_{[a^{(j)}]}:F_p^{s_1,q}\to F_{p}^{s_2,t}, \quad j=1,2
\end{equation*} for all $0<q,t\leq \infty$ and $s_1,s_2\in\mathbb{R}$. 
For the sake of completeness, a sketch of those bounds is provided in what follows. We start with the case $j=1$.
Note that the Fourier transform of $\sum_{k=0}^{j-3}{T_{[a_{j,k}]}f}$ is supported in an annulus of size $2^j$. Then the technique of Nikol'skii representation in \cite[2.5.2]{Tr} with the Fourier support condition yields that
\begin{equation}\label{aa11}
\big\Vert T_{[a^{(1)}]}f\big\Vert_{F_p^{0,t}}\lesssim \Big\Vert \Big( \sum_{j=3}^{\infty}{\Big| \sum_{k=0}^{j-3}{T_{[a_{j,k}]}{f}}\Big|^t}\Big)^{1/t}\Big\Vert_{L^p}.
\end{equation} 
Now using Marschall's inequality, stated in \cite[Lemma3.2]{Park}, one obtains that for any $N>0$
\begin{equation}\label{aa111}
\big| T_{[a_{j,k}]}f(x)\big|\lesssim_{N}2^{k(m+1+d/r)}2^{-jN}\mathcal{M}_r\big(\widetilde{\Lambda_k} f\big)(x)
\end{equation}
where $\widetilde{\Lambda_k}$ is a convolution operator having a similar property of $\Lambda_k$ with $\widetilde{\Lambda_k}\Lambda_k=\Lambda_k$.
Inserting (\ref{aa111}) in (\ref{aa11}) and using the Fefferman-Stein vector-valued maximal inequality,
\begin{equation*}
\big\Vert T_{[a^{(1)}]}f\big\Vert_{F_p^{0,t}}\lesssim_N \Vert f\Vert_{F_p^{c_{(m,d,r)}-N,t}}
\end{equation*} for some constant $c_{(m,d,r)}$, depending only on $m$, $d$, $r$,
and the required conclusion follows by letting $N$ sufficiently large and using embedding theorem.
To estimate the case $j=2$, we rewrite
\begin{equation*}
T_{[a^{(2)}]}f=\sum_{k=0}^{\infty}{\sum_{j=k-2}^{j+2}{T_{[a_{j,k}]}f}}=:\sum_{k=0}^{\infty}{T_{[a_k]}f}.
\end{equation*}
Then the kernel $K_k(x,y)$ of $T_{[a_k]}$ has the size estimate that for any $M,J>0$
\begin{equation*}
|K_k(x,y)|\lesssim_{J,M}2^{-Jk}\frac{1}{(1+|x-y|)^M},
\end{equation*} which follows from the method of repeated integrations by parts in \cite{St}.
Consequently, choosing $\sigma>d/\min{(p,t)}$ and  $M>\sigma+d$,
\begin{equation*}
\big| \Lambda_jT_{[a_k]}f(x)\big|\lesssim_{M,\sigma} 2^{-k(J-\sigma)}\mathfrak{M}_{\sigma,2^k}\big(\widetilde{\Lambda_k}f\big)(x) \quad \text{ uniformly in }~ j\geq 0
\end{equation*}
and finally, we apply the maximal inequalities (\ref{max}) and (\ref{inftymaximal}) to conclude that
\begin{equation*}
\Vert T_{[a^{(2)}]}f\Vert_{F_p^{0,t}}\lesssim \Vert f\Vert_{F_p^{c_{(p,t)}-J,t}}
\end{equation*}
for some constant $c_{(p,t)}$, depending on $p$ and $t$.
Then (\ref{strongerest}) clearly follows by letting $J$ large enough and using embedding theorem.

We now turn to the estimate for $T_{[a^{(3)}]}$.
As mentioned in Section \ref{intro}, it suffices to show that if $m-s_1+s_2=-d\big|1/2-1/p \big|$ then
\begin{align}
\big\Vert T_{[a^{(3)}]}f\big\Vert_{F_p^{s_2,p}(\mathbb{R}^d)}&\lesssim \Vert f\Vert_{F_p^{s_1,\infty}(\mathbb{R}^d)} \quad \text{ for }~ 0<p\leq 1  \label{bigclaim11} \\
\big\Vert T_{[a^{(3)}]}f\big\Vert_{F_p^{s_2,t}(\mathbb{R}^d)}&\lesssim \Vert f\Vert_{F_p^{s_1,p}(\mathbb{R}^d)}  \quad \text{ for }~ 2<p\leq \infty,~ 0<t<1. \label{bigclaim22}
\end{align}

Note that $T_{[a^{(3)}]}$ can be written as 
\begin{equation*}
T_{[a^{(3)}]}=\sum_{k=3}^{\infty}{T_{[b_k]}}
\end{equation*} where $b_k(x,\xi):=\big( \sum_{j=0}^{k-3}{\phi_j}\big)\ast a(\cdot,\xi)(x)\widehat{\phi_k}(\xi)$ is also a $\mathcal{S}_{0,0}^{m}$ symbol with a constant which is independent of $k$.
We first observe that for $0<r<\infty$  there exists a constant $C_r>0$ such that
\begin{equation}\label{previous}
\big\Vert T_{[b_k]}g_k\big\Vert_{L^r}\leq C_r  2^{k(m+d|1/2-1/r|)}\Vert g_k\Vert_{L^r},
\end{equation}
provided $g_k\in S$ satisfies $\widehat{g_k}\subset \{|\xi|\approx 2^k\}$ for each $k\in\mathbb{N}$.
This follows from the $h^r$ boundedness of $T_{[b_k]}$ and $\Vert g_k\Vert_{h^r}\approx_r \Vert g_k\Vert_{L^r}$ with the Fourier support condition of $g_k$.
Moreover, for $r=\infty$ we claim that for each $k\geq 3$ if $g_k\in C^{\infty}$ satisfies the polynomial growth estimate
\begin{equation}\label{polygrowthk}
|g_k(y)|\leq C_k (1+|y|)^{N_k}
\end{equation} for some $C_k$, $N_k>0$ then
\begin{equation}\label{inftyneed}
\big\Vert T_{[b_k]}g_k\big\Vert_{L^{\infty}}\lesssim 2^{k(m+d/2)}\Vert g_k\Vert_{L^{\infty}}
\end{equation} where the constant in the inequality is independent of $k$. 
Note that when $a\in \mathcal{S}_{\rho,\delta}^{m}$, $0\leq \delta<\rho<1$, (\ref{inftyneed}) also holds due to Fefferman \cite{Fe}.
To prove (\ref{inftyneed}) we need the following lemmas.

\begin{lemma}\label{fourierseries}
Suppose $g\in C^{\infty}$ satisfies the polynomial growth estimate
\begin{equation*}
|g(y)|\lesssim \big(1+|y| \big)^{N}, \quad \forall y\in\mathbb{R}^d
\end{equation*} for some $N>0$.
Then for any $\Psi\in S$ and  $x\in \mathbb{R}^d$
\begin{equation}\label{fourierco}
\Big(\sum_{l\in\mathbb{Z}^d}{\big| g\ast \big(\Psi e^{2\pi i\langle \cdot,l\rangle}\big)(x)\big|^2} \Big)^{1/2}=\Big( \int_{[0,1]^d}{\Big| \sum_{n\in\mathbb{Z}^d}{g(x-y+n)\Psi(y-n)}\Big|^2}dy\Big)^{1/2}.
\end{equation}
\end{lemma}

\begin{proof}
For each $x\in\mathbb{R}^d$ we define 
\begin{equation*}
G_x(y):=\sum_{n\in\mathbb{Z}^d}{g(x-y+n)\Psi(y-n)}.
\end{equation*}
We see that each $G_x$ is well-defined periodic function and $|G_x(y)|\lesssim_N \big(1+|x|\big)^{N}$.
Then (\ref{fourierco}) follows from Plancherel's identity and the observation
\begin{align*}
g\ast \big(\Psi e^{2\pi i\langle \cdot,l\rangle}\big)(x)&=\sum_{n\in\mathbb{Z}^d}\int_{-n+[0,1]^d}{g(x-y)\Psi(y)e^{2\pi i\langle y,l\rangle}}dy\\
&=\int_{[0,1]^d}{\sum_{n\in\mathbb{Z}^d}{g(x-y+n)\Psi(y-n)}e^{2\pi i\langle y,l\rangle}}dy\\
&=\int_{[0,1]^d}{G_x(y)e^{2\pi i\langle y,l\rangle}}dy.
\end{align*}  
\end{proof}

\begin{lemma}\label{inftylemma}
Let $\mu\in \mathbb{N}$ and $P\in\mathcal{D}_{\mu}$. Suppose $\{g_k\}_{k\in\mathbb{N}}$ is a sequence of $C^{\infty}$ functions satisfying $(\ref{polygrowthk})$.
Then there exists $\epsilon>0$ such that
\begin{equation}\label{inftyest}
\Big(\frac{1}{|P|}\int_P{\big|T_{[b_{k}]}g_k(x)\big|^2}dx \Big)^{1/2}\lesssim_{\epsilon}2^{k(m+d/2)}2^{-\epsilon(k-\mu)}\Vert g_k\Vert_{L^{\infty}}
\end{equation} for each integer $k\geq \mu$. Here the constant in the inequality is independent of $k$, $\mu$ and $P$.

\end{lemma}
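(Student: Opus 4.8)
The plan is to decompose $g_k$ into a piece supported on a neighbourhood of $P$ whose size is governed by $k-\mu$ and a complementary far piece, treating the first by $L^2$ theory and the second by Lemma \ref{fourierseries}. Write $x_P$ for the centre of $P$ and set $L:=2^{(k-\mu)/2}\ge 1$. Fix $\chi\in C_c^\infty$ equal to $1$ on the cube of side $L$ centred at $x_P$ and supported in its double, and split $g_k=g_k^0+g_k^\infty$ with $g_k^0:=g_k\chi$ and $g_k^\infty:=g_k(1-\chi)$; by $(\ref{polygrowthk})$ both are smooth of polynomial growth, and $g_k^0\in C_c^\infty$. It will be convenient to write $T_{[b_k]}h=T_{[b_k]}(\widetilde\Lambda_kh)$, where $\widetilde\Lambda_k$ is a Littlewood--Paley projection whose symbol is supported in $\{|\xi|\approx 2^k\}$ and equal to $1$ on the frequency support of $b_k(x,\cdot)$.

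For the near piece, $\Vert g_k^0\Vert_{L^2}\lesssim L^{d/2}\Vert g_k\Vert_{L^\infty}$, and since $\widetilde\Lambda_kg_k^0\in S$ is frequency localised to $\{|\xi|\approx 2^k\}$, estimate $(\ref{previous})$ with $r=2$ gives $\Vert T_{[b_k]}g_k^0\Vert_{L^2}=\Vert T_{[b_k]}(\widetilde\Lambda_kg_k^0)\Vert_{L^2}\lesssim 2^{km}\Vert\widetilde\Lambda_kg_k^0\Vert_{L^2}\lesssim 2^{km}L^{d/2}\Vert g_k\Vert_{L^\infty}$. Hence, since $|P|^{-1/2}=2^{\mu d/2}$,
\[
\Big(\frac1{|P|}\int_P\big|T_{[b_k]}g_k^0\big|^2\Big)^{1/2}\le|P|^{-1/2}\big\Vert T_{[b_k]}g_k^0\big\Vert_{L^2}\lesssim 2^{\mu d/2}2^{km}L^{d/2}\Vert g_k\Vert_{L^\infty}=2^{km}2^{(k+\mu)d/4}\Vert g_k\Vert_{L^\infty},
\]
which is $\le 2^{k(m+d/2)}2^{-(k-\mu)d/4}\Vert g_k\Vert_{L^\infty}$ because $(k+\mu)d/4+(k-\mu)d/4=kd/2$.

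For the far piece I would decompose in frequency at the unit scale. Choose $\Psi\in\mathcal S$ with $\widehat\Psi$ supported in $[-1/3,1/3]^d$ and $\sum_{l\in\mathbb{Z}^d}\widehat\Psi(\cdot-l)\equiv 1$. Only $l$ in a set $L_k$ with $\#L_k\lesssim 2^{kd}$ meet the frequency support of $b_k$, so $T_{[b_k]}g_k^\infty(x)=\sum_{l\in L_k}\int b_k(x,\xi)\widehat\Psi(\xi-l)\widehat{g_k^\infty}(\xi)e^{2\pi i\langle x,\xi\rangle}d\xi$. For $\xi\in l+[-1/3,1/3]^d$ I expand (after multiplying by a fixed periodising cutoff) $b_k(x,l+\zeta)=\sum_{n\in\mathbb{Z}^d}c_{k,l,n}(x)e^{2\pi i\langle\zeta,n\rangle}$; integrating by parts and using that $|\partial_\xi^\alpha b_k|\lesssim 2^{km}$ on the frequency support of $b_k$, one gets $|c_{k,l,n}(x)|\lesssim_N 2^{km}(1+|n|)^{-N}$ for every $N$. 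Substituting the series and identifying $\widehat\Psi(\xi-l)\widehat{g_k^\infty}(\xi)$ with the Fourier transform of $(\Psi e^{2\pi i\langle\cdot,l\rangle})*g_k^\infty$ gives
\[
T_{[b_k]}g_k^\infty(x)=\sum_{l\in L_k}\sum_{n\in\mathbb{Z}^d}c_{k,l,n}(x)\,e^{-2\pi i\langle l,n\rangle}\,\big((\Psi e^{2\pi i\langle\cdot,l\rangle})*g_k^\infty\big)(x+n).
\]

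Then I would apply Cauchy--Schwarz in $n$ (against an appropriate polynomial weight) and in $l$, using $\sum_{l\in L_k}|c_{k,l,n}(x)|^2\lesssim 2^{kd}2^{2km}(1+|n|)^{-2N}$, and invoke Lemma \ref{fourierseries} for $g=g_k^\infty$ to replace $\sum_{l}\big|(\Psi e^{2\pi i\langle\cdot,l\rangle})*g_k^\infty(x')\big|^2$ by $\int_{[0,1]^d}\big|\sum_{\nu}g_k^\infty(x'-y+\nu)\Psi(y-\nu)\big|^2dy$. The key observation is that for $x\in P$ and $|n|\le L/4$ the point $x+n$ lies deep inside the cube of side $L$ on which $g_k^\infty$ vanishes, so only $\nu$ with $|\nu|\gtrsim L$ contribute to the periodisation, and the rapid decay of $\Psi$ bounds that integral by $L^{-2(M-d)}\Vert g_k\Vert_{L^\infty}^2$ for arbitrary $M$; for $|n|>L/4$ the trivial bound $\Vert g_k\Vert_{L^\infty}^2$ together with the $n$-decay of $c_{k,l,n}$ suffices. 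Summing over $l$ and $n$ yields, for a $\gamma>0$ as large as we wish,
\[
\Big(\frac1{|P|}\int_P\big|T_{[b_k]}g_k^\infty\big|^2\Big)^{1/2}\lesssim 2^{kd/2}2^{km}L^{-\gamma}\Vert g_k\Vert_{L^\infty}=2^{k(m+d/2)}2^{-\gamma(k-\mu)/2}\Vert g_k\Vert_{L^\infty}.
\]
Adding the two contributions and taking $\epsilon:=\min\{d/4,\gamma/2\}$ with $\gamma\ge d/2$ gives $(\ref{inftyest})$, with constants independent of $k,\mu,P$. The hard part is the far piece: a crude pointwise kernel bound for $T_{[b_k]}$ only produces the annulus volume $2^{kd}$, and the correct power $2^{kd/2}$ is recovered solely because the $\approx 2^{kd}$ unit-frequency pieces recombine with $\ell^2$ (square-function) orthogonality --- which is exactly what Lemma \ref{fourierseries} supplies --- while at the same time the splitting scale $L$ must be tuned so that the near piece stays of $L^2$-size $\lesssim L^{d/2}\Vert g_k\Vert_{L^\infty}$ and the far piece gains the factor $L^{-\gamma}=2^{-\gamma(k-\mu)/2}$.
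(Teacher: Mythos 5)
Your proof is correct and follows essentially the same approach as the paper: a spatial near/far decomposition at scale $\approx 2^{(k-\mu)/2}$ around $P$, with the near piece handled by the $L^2$ bound $(\ref{previous})$ together with the volume of the localizing cube, and the far piece handled by a unit-scale frequency decomposition, Lemma~\ref{fourierseries} (to recover $\ell^2$ orthogonality across the $\approx 2^{kd}$ unit frequency blocks and so gain $2^{kd/2}$ rather than $2^{kd}$), and the rapid decay supplied by the $\mathcal{S}_{0,0}^m$ regularity of $b_k$. The only cosmetic differences are that you cut $g_k$ in physical space \emph{before} the unit-scale frequency decomposition, which lets the near piece bypass Lemma~\ref{fourierseries} (the paper cuts $g_k^l$ after decomposing and therefore uses Lemma~\ref{fourierseries} on both pieces), and that you encode the symbol regularity via the Fourier coefficients $c_{k,l,n}(x)$, whereas the paper uses the equivalent kernel estimate $|K_k^l(x,y)|\lesssim_M 2^{km}|x-y|^{-M}$ obtained by the same integration by parts in $\xi$; the paper also carries a free parameter $\delta\in(0,1)$ in place of your fixed $\delta=1/2$, which is immaterial since only some $\epsilon>0$ is needed.
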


\begin{proof}
Choose $\psi, \widetilde{\psi}\in S$ so that $Supp(\widehat{\psi})\subset \{\xi:|\xi|\leq 1+1/100\}$, $\widehat{\psi}(\xi)=1$ on $\{\xi:|\xi|\leq 1-1/100\}$, $\sum_{l\in\mathbb{Z}^d}{\widehat{\psi}(\xi-l)}=1$ for all $\xi\in\mathbb{R}^d$, $\widetilde{\psi}\ast\psi=\psi$, and $Supp(\widehat{\widetilde{\psi}})\subset \{\xi:|\xi|\leq 1+1/10\}$. 
For each $l\in\mathbb{Z}^d$ let $b_{k}^{l}(x,\xi):= b_{k}(x,\xi)\widehat{\widetilde{\psi}}(\xi-l)$.

Fix $k\geq \mu$ and for each $l\in \mathbb{Z}^d$ let $\widehat{g_k^l}(\xi):=\widehat{g_k}(\xi)\widehat{{\psi}}(\xi-l)$.
Let $P^*$ be a dilate of $P\in\mathcal{D}_{\mu}$ by a factor of $10$, and for $0<\delta<1$ and $k\geq \mu$ let $P_{\mu,k}^{\delta}$ be a dilate of $P\in\mathcal{D}_{\mu}$ whose side length is $2^{\delta(k-\mu)}$ with the same center $c_P$.
Then the left-hand side of (\ref{inftyest}) is
\begin{equation*}
\Big(\frac{1}{|P|}\int_P{\Big|\sum_{|l|\approx 2^k}{T_{[b_{k}^{l}]}g_{k}^{l}(x)} \Big|^2}dx \Big)^{1/2}\leq I_P+II_P
\end{equation*} where
\begin{equation*}
I_P:=\Big(\frac{1}{|P|}\int_P{\Big|\sum_{|l|\approx 2^k}{T_{[b_{k}^{l}]} \big(\chi_{P_{\mu,k}^{\delta}}g_k^l\big)(x)} \Big|^2}dx \Big)^{1/2}
\end{equation*}
\begin{equation*}
II_P:=\Big(\frac{1}{|P|}\int_P{\Big|\sum_{|l|\approx 2^k}{T_{[b_{k}^{l}]} \big(\chi_{(P_{k,\mu}^{\delta})^c}g_k^l\big)(x)} \Big|^2}dx \Big)^{1/2}.
\end{equation*}

By using  (\ref{previous})  we have
\begin{align*}
I_P&\leq 2^{\mu d/2}\Big\Vert \sum_{|l|\approx 2^k}{T_{[b_{k}^{l}]} \big(\chi_{P_{\mu,k}^{\delta}}g_k^l\big)}\Big\Vert_{L^2}=2^{\mu d/2}\Big\Vert T_{[b_{k}]}\Big(\sum_{|l|\approx 2^k}{\big(\widetilde{\psi}e^{2\pi i\langle \cdot,l\rangle}\big)\ast \big(\chi_{P_{\mu,k}^{\delta}}g_k^l\big)} \Big)\Big\Vert_{L^2}\\
    &\lesssim2^{k(m+d/2)}2^{-(k-\mu) d/2}\Big\Vert \sum_{|l|\approx 2^k}{\big(\widetilde{\psi}e^{2\pi i\langle \cdot,l\rangle}\big)\ast \big(\chi_{P_{\mu,k}^{\delta}}g_k^l\big)}\Big\Vert_{L^2}
\end{align*}
and the almost orthogonality property of $\big\{\widehat{\widetilde{\psi}}(\cdot-l)\big\}_{l\in\mathbb{Z}^d}$ and Young's inequality yield that the last expression is majored by  
\begin{align*}
 &2^{k(m+d/2)} 2^{-(k-\mu)d/2}\Big( \sum_{|l|\approx 2^k}{\big\Vert \chi_{P_{\mu,k}^{\delta}}g_k^l\big\Vert_{L^2}^2}\Big)^{1/2}\\
 &\leq 2^{k(m+d/2)}2^{-(k-\mu)d/2}\Big(\int_{P_{\mu,k}^{\delta}}{\sum_{|l|\approx 2^k}{\big|g_k\ast \big(\psi e^{2\pi i\langle \cdot,l\rangle}\big)(x) \big|^2}}dx\Big)^{1/2}.
\end{align*}
Then we apply Lemma \ref{fourierseries} to conclude that 
\begin{align*}
I_P&\lesssim 2^{k(m+d/2)}2^{-(k-\mu)d/2}\Big(\int_{P_{\mu,k}^{\delta}}{\int_{[0,1]^d}{\Big|\sum_{n\in\mathbb{Z}^d}{g_k(x-y+n)\psi(y-n)} \Big|^2}dy}dx \Big)^{1/2}\\
&\lesssim 2^{k(m+d/2)}2^{-(k-\mu)(1-\delta)d/2}\Vert g_k\Vert_{L^{\infty}}.
\end{align*}

To estimate $II_P$ let $K_{k}^l(x,y)$ be the kernel of $T_{[b_{k}^{l}]}$. Then it follows from integration by parts that
\begin{equation*}
|K_{k}^{l}(x,y)|\lesssim_M  2^{km}\frac{1}{|x_P-y|^M}, \quad |l|\approx 2^k
\end{equation*}
for $x\in P$ and $y\in (P_{\mu,k}^{\delta})^c$.
Therefore
\begin{align*}
II_P&\leq \sup_{x\in P}{\sum_{|l|\approx 2^k}{\big|T_{[b_{k}^{l}]} \big(\chi_{(P_{\mu,k}^{\delta})^c}g_k^l\big)(x) \big|}}\\
&\lesssim_M 2^{km}\int_{|x_P-y|\gtrsim 2^{\delta(k-\mu)}}{\frac{1}{|x_P-y|^M}\sum_{|l|\approx 2^k}{|g_k^l(y)|}}dy\\
&\lesssim 2^{k(m+d/2)}2^{-\delta(k-\mu)(M-d)}\Big\Vert \Big(\sum_{l\in\mathbb{Z}^d}{|g_k^l|^2} \Big)^{1/2}\Big\Vert_{L^{\infty}}
\end{align*} for $M>d$.
Now we apply Lemma \ref{fourierseries} to obtain
\begin{align*}
\Big\Vert \Big(\sum_{l\in\mathbb{Z}^d}{|g_k^l|^2} \Big)^{1/2}\Big\Vert_{L^{\infty}}&=\Big\Vert \Big(\sum_{l\in\mathbb{Z}^d}{|g_k\ast\big(\psi e^{2\pi i\langle \cdot,l\rangle}\big)|^2} \Big)^{1/2}\Big\Vert_{L^{\infty}}\\
&\leq \Vert g_k\Vert_{L^{\infty}}\Big\Vert \sum_{n\in\mathbb{Z}^d}{|\psi(\cdot-n)|} \Big\Vert_{L^{\infty}}\lesssim \Vert g_k\Vert_{L^{\infty}}.
\end{align*}
This yields that for $M>d$
\begin{equation*}
II_P\lesssim_M 2^{k(m+d/2)}2^{-\delta(k-\mu)(M-d)}\Vert g_k\Vert_{L^{\infty}}.
\end{equation*} 

The proof is done by choosing $\epsilon=\min{\big((1-\delta)d/2,\delta(M-d)\big)}$.
\end{proof}

Now we return to the proof of (\ref{inftyneed}).
By using (\ref{inftymaximal}) and Lemma \ref{inftylemma}, it follows that 
\begin{align*}
\big\Vert T_{[b_k]}g_k\big\Vert_{L^{\infty}}&\lesssim \big\Vert T_{[b_k]}g_k\big\Vert_{F_{\infty}^{0,\infty}}\lesssim \big\Vert T_{[b_k]}g_k\big\Vert_{F_{\infty}^{0,2}}\\
&\leq \sup_{P\in\mathcal{D}, 2^{-k-2}\leq l(P)<1}{\Big( \frac{1}{|P|}\int_{P}{\sum_{j=k-2}^{k+2}{\big|\Lambda_jT_{[b_k]}g_k(x) \big|^2}}dx\Big)^{1/2}}\\
&\lesssim_{\sigma} \sup_{P\in\mathcal{D}, 2^{-k-2}\leq l(P)<1}{\Big( \frac{1}{|P|}\int_{P}{{\big(\mathfrak{M}_{\sigma,2^k}T_{[b_k]}g_k(x) \big)^2}}dx\Big)^{1/2}}\\
&\lesssim \sup_{P\in\mathcal{D}, 2^{-k-2}\leq l(P)<1}{\Big( \frac{1}{|P|}\int_{P}{{\big|T_{[b_k]}g_k(x) \big|^2}}dx\Big)^{1/2}}\\
&\lesssim_{\epsilon} 2^{k(m+d/2)}\Vert g_k\Vert_{L^{\infty}}\sup_{1\leq \mu\leq k+2}{2^{-\epsilon(k-\mu)}} \lesssim 2^{k(m+d/2)}\Vert g_k\Vert_{L^{\infty}}
\end{align*} for $\sigma>d/2$ because $Supp(\widehat{T_{[b_k]}g_k})\subset \{\xi:2^{k-2}\leq |\xi|\leq 2^{k+2}\}$.

\subsection{Proof of (\ref{bigclaim11})}
Suppose $0<p\leq 1$ and $m-s_1+s_2=-d(1/p-1/2)$.
According to (\ref{decomposition1}) and Lemma \ref{decomhardy}, $f\in F_p^{s_1,\infty}$ can be decomposed with $\{b_Q\}_{\substack{Q\in\mathcal{D}\\l(Q)\leq 1}}\in f_p^{s_1,\infty}$ and there exist a sequence of scalars $\{\lambda_j\}$ and a sequence of $\infty$-atoms $\{r_{j,Q}\}$ for $f_p^{s_1,\infty}$ such that
\begin{equation*}
f(x)=\sum_{Q\in\mathcal{D}, l(Q)\leq 1}{b_Q\vartheta^Q(x)}=\sum_{j=1}^{\infty}{\lambda_j \sum_{Q\in\mathcal{D}, l(Q)\leq 1}{ r_{j,Q}\vartheta^Q (x)  }}.
\end{equation*}
Then by using the same arguments in \cite[(3.16)]{Park}, one has
\begin{equation*}
\big\Vert T_{[a^{(3)}]}f\big\Vert_{F_p^{s_2,p}}\lesssim \Big(\sum_{j=1}^{\infty}{|\lambda_j|^p}\Big)^{1/p}\sup_j\Big(\sum_{k=3}^{\infty}{2^{s_2kp}\Big\Vert T_{[b_k]}\Big(\sum_{Q\in\mathcal{D},l(Q)\leq 1}{r_{j,Q}\vartheta^{Q}}\Big)\Big\Vert_{L^p}^p} \Big)^{1/p}.
\end{equation*}
Since $\big(\sum_{j=1}^{\infty}{|\lambda_j|^p}\big)^{1/p}\lesssim \Vert f\Vert_{F_p^{s_1,\infty}}$
we are reduced to proving that
 \begin{equation}\label{reduction}
\Big( \sum_{k=\max{(3,\mu)}}^{\infty}{2^{s_2kp}\big\Vert T_{[b_k]}R_{P,k}\big\Vert_{L^p}^p}\Big)^{1/p}\lesssim 1, \quad \text{uniformly in }\mu \in \mathbb{Z} \text{ and }  P \in \mathcal{D}_{\mu}
\end{equation}
where $\{r_Q\}$ is $\infty$-atoms for $f_p^{s_1,\infty}$ associated with $P$, and
 \begin{equation*}
 R_{P,k}(x):=\sum_{\substack{Q\in\mathcal{D}_k,Q\subset P, \\ l(Q)\leq 1}}{r_Q\vartheta^Q(x)}.
 \end{equation*}
The sum in (\ref{reduction}) was additionally taken over $\mu\leq k$ because the condition $Q\subset P$ in the definition of $R_{P,k}$ ensures that $R_{P,k}$ vanishes unless $\mu\leq k$.

We first consider the case $l(P)\leq 2^{-3}$ ( i.e. $\mu\geq 3$ ).
Our claim is that there exists $\epsilon>0$ such that
\begin{equation}\label{reductionshow}
\big\Vert T_{[b_k]}R_{P,k}\big\Vert_{L^p}\lesssim 2^{-s_2k}2^{-\epsilon (k-\mu)} \quad \text{uniformly in }\mu \text{ and } P 
\end{equation} for each $k\geq \mu$.
Then  (\ref{reduction}) follows immediately.
To show (\ref{reductionshow}) we fix $0<\delta<1$ and let $P^*$ and $P_{\mu,k}^{\delta}$ be dilates of $P$ as in the proof of Lemma \ref{inftylemma}.
The left-hand side of (\ref{reductionshow}) is less than a constant times
\begin{equation*}
 \big\Vert T_{[b_k]} R_{P,k}\big\Vert_{L^p(P_{\mu,k}^{\delta})}+\big\Vert T_{[b_k]}\big(\chi_{P^*}R_{P,k}\big)\big\Vert_{L^p((P_{\mu,k}^{\delta})^c)}+\big\Vert T_{[b_k]}\big(\chi_{(P^*)^c}R_{P,k}\big)\big\Vert_{L^p((P_{\mu,k}^{\delta})^c)}.
\end{equation*}
By applying H\"older's inequality and (\ref{previous}) the first term is majored by
\begin{equation*}
 |P_{\mu,k}^{\delta}|^{1/p-1/2}\big\Vert T_{[b_k]} R_{P,k}\big\Vert_{L^2}\lesssim 2^{\delta(k-\mu)d(1/p-1/2)}2^{km}\Vert R_{P,k}\Vert_{L^2}.
\end{equation*}
Since $\Vert R_{P,k}\Vert_{L^2}\approx 2^{\mu d(1/p-1/2)}2^{-s_1k}$,
one has
\begin{equation*}
\big\Vert T_{[b_k]} R_{P,k}\big\Vert_{L^p(P_{\mu,k}^{\delta})}\lesssim 2^{-s_2k} 2^{-(k-\mu)d(1/p-1/2)(1-\delta)}.
\end{equation*}
Moreover, we observe that $T_{[b_k]} \big(\chi_{(P^*)^c}R_{P,k}\big)=T_{[b_k]} \big(\widetilde{\phi_k}\ast   (\chi_{(P^*)^c}R_{P,k})\big)$ where $\widetilde{\phi_k}:=\phi_{k-1}+\phi_k+\phi_{k+1}$ for $k\geq 3$. Then it follows from (\ref{previous}) that
\begin{equation*}
\big\Vert T_{[b_k]} \big(\chi_{(P^*)^c}R_{P,k}\big)\big\Vert_{L^p((P_{\mu,k}^{\delta})^c)}\lesssim 2^{(s_1-s_2)k}\big\Vert \widetilde{\phi_k}\ast \big(\chi_{(P^*)^c}R_{P,k}\big)\big\Vert_{L^p}
\end{equation*}
and the argument in \cite[p559]{Park} yields $\big\Vert \widetilde{\phi_k}\ast \big(\chi_{(P^*)^c}R_{P,k}\big)\big\Vert_{L^p}\lesssim_{\epsilon_0} 2^{-s_1k} 2^{-(k-\mu)\epsilon_0}$ for some $\epsilon_0>0$, which establishes
\begin{equation*}
\big\Vert T_{[b_k]} \big(\chi_{(P^*)^c}R_{P,k}\big)\big\Vert_{L^p((P_{\mu,k}^{\delta})^c)} \lesssim 2^{-s_2k }2^{-(k-\mu)\epsilon_0}.
\end{equation*}
Now let us look at the term $\big\Vert T_{[b_k]} \big(\chi_{P^*}R_{P,k}\big)\big\Vert_{L^p((P_{\mu,k}^{\delta})^c)}$.
Let $K_k(x,y)$ be the kernel of $T_{[b_k]}$ and define
\begin{equation*}
c_k(y,\eta):= \int_{\mathbb{R}^d}{K_k(x+y,y)e^{-2\pi i\langle x,\eta\rangle}}dx.
\end{equation*}
Then $\overline{c_k(y,\eta)}$ can be interpreted as a symbol corresponding to the adjoint operator of $T_{[b_k]}$ and therefore $c_k$ also belongs to $\mathcal{S}_{0,0}^{m}$. See \cite[Appendix]{Park} for more details.
 Furthermore $\eta$ lives in the annulus $\{\eta: 2^{k-2}\leq |\eta|\leq 2^{k+2}\}$. Thus for any multi-indices $\alpha$ one has
\begin{equation}\label{kernelest}
\Big(\int_{\mathbb{R}^d}{\big|(x-y)^{\alpha}K_k(x,y) \big|^2}dx \Big)^{1/2}=\Big(\int_{\mathbb{R}^d}{\big|\partial_{\eta}^{\alpha}c_k(y,\eta) \big|^2}d\eta \Big)^{1/2}\lesssim 2^{k(m+d/2)}
\end{equation} by using Plancherel's theorem.
When $0<p<1$, H\"older's inequality with $1/p>1$ and Fubini's theorem prove
\begin{align*}
& \big\Vert T_{[b_k]}\big(\chi_{P^*}R_{P,k}\big)\big\Vert_{L^p((P_{\mu,k}^{\delta})^c)}\\
&\lesssim  \Big( \int_{(P_{\mu,k}^{\delta})^c}{\frac{1}{|x-x_P|^{L/(1-p)}}}dx\Big)^{1/p-1}\int_{y\in P^*}{|R_{P,k}(y)|\int_{x\in (P_{\mu,k}^{\delta})^c}{|x-x_P|^{L/p}|K_k(x,y)|}dx}dy\\
&\lesssim_L 2^{-\delta(k-\mu)(L/(1-p)-d)(1/p-1)}\Big( \int_{(P_{\mu,k}^{\delta})^c}{\frac{1}{|x-x_P|^{2N}}}dx\Big)^{1/2}\\
&\relphantom{=}\times\int_{y\in P^*}{|R_{P,k}(y)|\Big(\int_{x\in (P_{\mu,k}^{\delta})^c}{|x-x_P|^{2L/p+2N}\big|K_k(x,y) \big|^2}dx \Big)^{1/2}}dy
\end{align*} where we recall $x_P$ denotes the lower left corner of $P$.
For $x\in (P_{\mu,k}^{\delta})^c$ and $y\in P^*$ we observe taht  $|x-x_P|\lesssim |x-y|$ and thus
\begin{align*}
& \Big(\int_{x\in (P_{\mu,k}^{\delta})^c}{|x-x_P|^{2L/p+2N}\big|K_k(x,y) \big|^2}dx \Big)^{1/2}\\
&\lesssim \Big(\int_{x\in (P_{\mu,k}^{\delta})^c}{|x-y|^{2L/p+2N}\big|K_k(x,y) \big|^2}dx \Big)^{1/2}\lesssim 2^{k(m+d/2)}
\end{align*} due to (\ref{kernelest}). This proves
\begin{align*}
\big\Vert T_{[b_k]}\big(\chi_{P^*}R_{P,k}\big)\big\Vert_{L^p((P_{\mu,k}^{\delta})^c)}&\lesssim_{L,N} 2^{k(m+d/2)}2^{-\delta(k-\mu)(L/(1-p)-d)(1/p-1)}2^{-\delta(k-\mu)(N-d/2)}\Vert R_{P,k}\Vert_{L^1}\\
&\approx 2^{-s_2k}2^{-\delta(k-\mu)(L/(1-p)-d)(1/p-1)}2^{-\delta(k-\mu)(N-d/2)}2^{-(k-\mu)d(1/p-1)}.
\end{align*} 
Similarly, for $p=1$ one can prove
\begin{equation*}
\big\Vert T_{[b_k]}\big(\chi_{P^*}R_{P,k}\big)\big\Vert_{L^1((P_{\mu,k}^{\delta})^c)}\lesssim_N 2^{-s_2k}2^{-\delta(k-\mu)(N-d/2)},
\end{equation*} 
which completes the proof of (\ref{reductionshow}) for $l(P)\leq 2^{-3}$.

When $l(P)>2^{-3}$ ( i.e. $\mu<3$ ) we replace $P_{\mu,k}^{\delta}$ by a dilate of $P$ whose side length is $2^{k-\mu}$ with the same center $c_P$ in the above argument and then obtain 
\begin{equation*}
\big\Vert T_{[b_k]}R_{P,k}\big\Vert_{L^p}\lesssim_{\epsilon} 2^{-s_2k}2^{-\epsilon k}.
\end{equation*} for some $\epsilon>0$.
Then (\ref{reduction}) follows.

\subsection{The proof of (\ref{bigclaim22})}

Suppose $2<p\leq \infty$ and $m-s_1+s_2=-d(1/2-1/p)$. 
For $k\geq 3$ let $\widetilde{\phi_k}:=\phi_{k-1}+\phi_k+\phi_{k+1}$ as before and $f_k:=\widetilde{\phi_k}\ast f$.
Then note that $T_{[b_k]}f=T_{[b_k]}f_k$ and $Supp(\widehat{T_{[b_k]}f_k})\subset \{\xi:2^{k-2}\leq |\xi|\leq 2^{k+2}\}$.

\subsubsection{The case $2<p<\infty$}
Using the method of Nikol'skii representation, as in (\ref{aa11}), and Lemma \ref{54}, one has
\begin{equation*}
\big\Vert T_{[a^{(3)}]}f\big\Vert_{F_p^{s_2,t}}\lesssim \Big\Vert \Big( \sum_{k=3}^{\infty}{2^{s_2kt}\big|T_{[b_k]}f_k \big|^t}\Big)^{1/t}\Big\Vert_{L^p}\lesssim \big\Vert \mathcal{N}_t^{\sharp,3}\big(\big\{2^{s_2k}T_{[b_k]}f_k \big\}\big)\big\Vert_{L^p}
\end{equation*}
Given a sequence $\{T_{[b_k]}f_k\}$ we can choose dyadic cubes $P(x)$ depending measurably on $x$ so that
\begin{equation*}
\mathcal{N}_t^{\sharp,3}\big(\big\{2^{s_2k}T_{[b_k]}f_k \big\}\big)(x)\lesssim \Big(\frac{1}{|P(x)|}\int_{P(x)}{\sum_{k=\max{(3,-\log_2{l(P(x))})}}^{\infty}{2^{s_2kt}|T_{[b_{k}]}f_k(y)|^t}}dy \Big)^{1/t}.
\end{equation*}
For each $x\in \mathbb{R}^d$ let $\mu(x):=-\log_2{l(P(x))}$ and $\lambda(x):=\max{(3,\mu(x))}$.
Then by using H\"older's inequality with $1/t>1$ the last expression is less than a constant multiple of 
\begin{align*}
&\sum_{k=\lambda(x)}^{\infty}{2^{\delta(k-\lambda(x))}2^{s_2k}\frac{1}{|P(x)|}\int_{P(x)}{\big|T_{[b_{k}]} f_k(y) \big|}dy}\\
&=\sum_{n=0}^{\infty}{2^{\delta n}\frac{1}{|P(x)|}\int_{P(x)}{2^{s_2(n+\lambda(x))}\big|T_{[b_{n+\lambda(x)}]}f_{n+\lambda(x)}(y) \big|}dy}
\end{align*}
where $\delta>0$ is any positive number and the equality follows from a change of variables $n=k-\lambda(x)$.
Therefore,
\begin{align*}
\big\Vert \mathcal{N}_t^{\sharp,3}\big(\big\{2^{s_2k}T_{[b_k]}f_k \big\}\big)\big\Vert_{L^p}&\lesssim_{\delta} \Big\Vert \sum_{n=0}^{\infty}{2^{\delta n}\frac{1}{|P(x)|}\int_{P(x)}{2^{s_2(n+\lambda(x))}\big| T_{[b_{n+\lambda(x)}]}f_{n+\lambda(x)}(y) \big|}dy}\Big\Vert_{L^p(x)}\\
&\leq \sum_{n=0}^{\infty}{2^{\delta n}\Big\Vert \frac{1}{|P(x)|}\int_{P(x)}{2^{s_2(n+\lambda(x))}\big|T_{[a_{n+\lambda(x)}]}f_{n+\lambda(x)}(y) \big|}dy\Big\Vert_{L^p(x)}}
\end{align*}
Now the proof is reduced to proving that for $2<p<\infty$ there exists $\epsilon_0>0$ such that
\begin{equation}\label{mainmainclaim}
\Big\Vert \frac{1}{|P(x)|}\int_{P(x)}{2^{s_2(n+\lambda(x))}\big|T_{[a_{n+\lambda(x)}]}f_{n+\lambda(x)}(y) \big|}dy\Big\Vert_{L^p(x)}\lesssim_{\epsilon_0}2^{-\epsilon_0n}\Vert f\Vert_{F_p^{s_1,p}}
\end{equation}
where the constant in the inequality does not depend on the choice of mapping $x\mapsto P(x)$,
because  (\ref{mainmainclaim}), choosing $0<\delta<\epsilon_0$,  implies that
\begin{equation*}
\big\Vert \mathcal{N}_t^{\sharp,3}\big(\big\{2^{s_2k}T_{[b_k]}f_k \big\}\big)\big\Vert_{L^p}\lesssim \Vert f\Vert_{F_p^{s_1,p}}\Big(\sum_{n=0}^{\infty}{2^{-n(\epsilon_0-\delta)}} \Big) \lesssim \Vert f\Vert_{F_p^{s_1,p}}.
\end{equation*}

In the rest of this section we focus on the proof of (\ref{mainmainclaim}) and 
the main idea is a complex interpolation theorem in \cite[Chapter8]{Fr_Ja1}.
For $0\leq Re(z)\leq 1$ we define
\begin{align*}
&\mathfrak{S}_n^{z}(\{f_k\}_{k\in\mathbb{N}})(x)\\
&:= 2^{d(1-z)(n+\lambda(x))/2}2^{(s_2-d/p)(n+\lambda(x))}\frac{1}{|P(x)|}\int_{P(x)}{\omega_{n+\lambda(x)}(x,y)T_{[b_{n+\lambda(x)}]} f_{n+\lambda(x)}(y)}dy.
\end{align*}
where $\omega_k(x,y)$'s are measurable functions satisfying $\sup_{x,y,k}{\big| \omega_{k}(x,y)\big|}\leq 1$.
Then it suffices to prove that for some $\epsilon_0>0$
\begin{equation*}
\big\Vert \mathfrak{S}_n^{\theta}(\{f_k\}_{k\in\mathbb{N}})\big\Vert_{L^p}\lesssim_{\epsilon_0} 2^{-\epsilon_0n}\Vert f\Vert_{F_p^{s_1,p}}, \quad \theta=1-2/p,
\end{equation*}
which follows from interpolating between
\begin{equation*}
\big\Vert  \mathfrak{S}_n^{i\tau}(\{f_k\}_{k\in\mathbb{N}})\big\Vert_{L^2}\lesssim \Vert f\Vert_{F_2^{s_1,2}}, \quad Re(z)=0,
\end{equation*}
\begin{equation*}
\big\Vert  \mathfrak{S}_n^{1+i\tau}(\{f_k\}_{k\in\mathbb{N}})\big\Vert_{L^{\infty}}\lesssim 2^{-\epsilon n}\Vert f\Vert_{F_{\infty}^{s_1,\infty}}, \quad Re(z)=1
\end{equation*} for some $\epsilon>0$.

For $z=i\tau$, $\tau\in\mathbb{R}$ one has
\begin{align*}
\big|\mathfrak{S}_n^{i\tau}(\{f_k\})(x) \big|&\leq  \frac{1}{|P(x)|}\int_{P(x)}{\sup_{k\in\mathbb{N}}{2^{(s_2-d/p+d/2)k}\big|T_{[b_{k}]} f_k(y) \big|}}dy\\
&\leq \mathcal{M}\Big( \Big( \sum_{k\in\mathbb{N}}{ 2^{2(s_2-d/p+d/2)k}\big| T_{[b_{k}]} f_k\big|^2}\Big)^{1/2}\Big)(x)
\end{align*}
 and then  the $L^2$ estimate for $\mathcal{M}$, Fubini, and (\ref{previous}) establish
 \begin{equation*}
 \big\Vert \mathfrak{S}_{n}^{i\tau}(\{f_k\})\big\Vert_{L^2}\lesssim  \Big( \sum_{k=1}^{\infty}{\big\Vert 2^{(s_2-d/p+d/2)k}T_{[b_{k}]} f_k \big\Vert_{L^2}^2}\Big)^{1/2}\lesssim  \Big( \sum_{k=1}^{\infty}{\big\Vert 2^{s_1k}f_k\big\Vert_{L^2}^2}\Big)^{1/2}\lesssim \Vert f\Vert_{F_p^{s_1,2}}.
 \end{equation*}

Moreover, by the Cauchy-Schwarz inequality and Lemma \ref{inftylemma},
one has
\begin{align*}
\big| \mathfrak{S}_n^{1+i\tau}(\{f_k\})(x)\big|&\leq  2^{(s_2-d/p)(n+\lambda(x))}\frac{1}{|P(x)|}\int_{P(x)}{\big|T_{[b_{n+\lambda(x)}]}f_{n+\lambda(x)}(y) \big|}dy\\
&\lesssim_{\epsilon} 2^{s_1(n+\lambda(x))}2^{-\epsilon(n+\lambda(x)-\mu(x))}\Vert f_{n+\lambda(x)}\Vert_{L^{\infty}}\\
&\leq 2^{-\epsilon n}\sup_{k\in\mathbb{N}}{\Vert 2^{s_1k}f_k\Vert_{L^{\infty}}}
\end{align*} for some $\epsilon>0$.
Therefore, 
\begin{equation*}
\big\Vert \mathfrak{S}_n^{1+i\tau}(\{f_k\}_{k\in\mathbb{N}})\big\Vert_{L^{\infty}}\lesssim 2^{-\epsilon n}\sup_{k\in\mathbb{N}}{\Vert 2^{s_1k}f_k\Vert_{L^{\infty}}}\lesssim 2^{-\epsilon n}\Vert f\Vert_{F_{\infty}^{s_1,\infty}}.
\end{equation*}

Here, the inequalities are all independent of $\omega_k$, and thus, by suitably choosing $\omega_k$, the proof of (\ref{mainmainclaim}) is complete.

\subsubsection{The case $p=\infty$}
By considering the support of $\widehat{T_{[b_k]}f_k}$ and using the method of Nikol'skii representation with replacement of (\ref{max}) by (\ref{inftymaximal}), one has
\begin{align*}
\big\Vert T_{[a^{(3)}]}f\big\Vert_{F_{\infty}^{s_2,t}}&=\Big\Vert \sum_{k=3}^{\infty}{T_{[b_k]}f_k}\Big\Vert_{F_{\infty}^{s_2,t}}\\
 &\lesssim \sup_{P\in\mathcal{D},l(P)<1}{\Big( \frac{1}{|P|}\int_P{\sum_{k=\max{(3,-2-\log_2{l(P)})}}^{\infty}{2^{s_2kt}\big| T_{[b_k]}f_k(x)\big|^t}}dx\Big)^{1/t}}\\
 &\lesssim \sup_{P\in\mathcal{D},l(P)\leq 2^{-3}}{\Big( \frac{1}{|P|}\int_P{\sum_{k=-\log_2{l(P)}}^{\infty}{2^{s_2kt}\big| T_{[b_k]}f_k(x)\big|^t}}dx\Big)^{1/t}}.
\end{align*}
Then H\"older's inequality with $2/t>1$ and Lemma \ref{inftylemma} yield that the last expression is majored by a constant times
\begin{equation*}
\sup_{P\in\mathcal{D},l(P)\leq 2^{-3}}\Big(\sum_{k=-\log_2{l(P)}}^{\infty}{2^{s_1kt}\big(2^k l(P)\big)^{-\epsilon t}\Vert f_k\Vert_{L^{\infty}}^t} \Big)^{1/q}\lesssim \Vert f\Vert_{F_{\infty}^{s_1,\infty}}
\end{equation*}  for some $\epsilon>0$.

\section{\textbf{Proof of Theorem \ref{main2}}}\label{proofmain2}
We need to show that (\ref{bigclaim3}) holds for all $0<p,q\leq \infty$ under the assumption $m-s_1+s_2=-d\big|1/2-1/p \big|$.
The estimates (\ref{strongerest}) prove that $\big\Vert T_{[a^{(j)}]}f\big\Vert_{B_{p}^{s_2,q}}\lesssim \Vert f\Vert_{B_{p}^{s_1,q}}$ for each $j=1,2$, and the boundedness of $T_{[a^{(3)}]}$ is an immediate consequence of (\ref{previous}) and (\ref{inftyneed}).


\section{\textbf{Proof of Theorem \ref{sharptheorem} and \ref{sharptheorem2}}}

We construct multiplier operators and  since $s_1$ and $s_2$ do not affect the boundedness of multipliers, we assume $s_1=s_2=0$. 
As described earlier, we need to prove Theorem \ref{sharptheorem} (3), (4), and Theorem \ref{sharptheorem2} (2), assuming $m=-d\big| 1/2-1/p\big|$.

Recall the Khintchine's inequality;
For a (countable) index set $\mathcal{I}$ , let $\{r_n\}_{n\in \mathcal{I}}$ be the Rademacher functions defined on $[0,1]$ and $\{c_n\}_{n\in\mathcal{I}}$ be a sequence of complex numbers. 
Then for $0<p<\infty$,
\begin{equation*}
\Big(  \int_0^1{\big| \sum_{n\in\mathcal{I}}{c_nr_n(v)}\big|^p}dv  \Big)^{1/p} \approx \Big(  \sum_{n\in\mathcal{I}}{|c_n|^2}  \Big)^{1/2}.
\end{equation*}

For each $k\in\mathbb{N}$ let $\zeta_k:=10k$ and  $\mathcal{N}_k:=\{n\in\mathbb{Z}^d : 2^{\zeta_k+2}\leq |n|<2^{\zeta_k+3}\}$.
 Now let $\{r_n\}_{n\in\mathcal{N}_k}$ be the Rademacher functions and define \begin{equation*}
M^v(\xi):=\sum_{k=10}^{\infty}{2^{\zeta_km}\sum_{n\in\mathcal{N}_k}{r_n(v)\widehat{\phi}(\xi-n)}}.
\end{equation*} 
Then 
$ \big\{ 2^{\zeta_km}\sum_{n\in\mathcal{N}_{k}}{r_n(v)\widehat{\phi}(\xi-n)}\big\}_{k=0}^{\infty}$ is a collection of functions having pairwise disjoint compact support $\{\xi\in\mathbb{R}^d:|\xi|\approx 2^{\zeta_k}\}$.
Moreover, the differentiation does not change the decay at all. 
Since $|r_n(v)|=1$ for all $n\in\mathbb{Z}^d$ and $v\in [0,1]$,  $M^v \in \mathcal{S}_{0,0}^{m}$ uniformly in $v\in[0,1]$.

Let $\mathcal{G}(x)=\sum_{j=1}^{4}{\phi_j(x)}$. Then its Fourier transform equals $1$ on $\{\xi\in\mathbb{R}^d:2\leq |\xi|\leq 2^4\}$, vanishes outside $\{\xi\in\mathbb{R}^d:1<|\xi|<2^5\}$.
This implies that 
\begin{equation}\label{supporttrick}
\widehat{\mathcal{G}}(\xi/2^{\zeta_k})\widehat{\phi}(\xi-n)=\widehat{\phi}(\xi-n), \quad \text{for}~ n\in\mathcal{N}_k.
\end{equation}

\subsection{Proof of Theorem \ref{sharptheorem} (2)}
Assume $0<t<p\leq 2$ and $m=-d(1/p-1/2)$.
We shall use a randomization technique in \cite{Ch_Se}.

For each $k\in\mathbb{N}$ let $\mathcal{Q}(k)$ be the subset of $\mathcal{D}_{\zeta_k}$ contained in $[0,1]^d$. Let $\Omega$ be a probability space with probability measure $\mu$ and let $\{\theta_Q\}$ be a family of independent random variables, each of which takes the value $1$ with probability $A_k$ and the value $0$ with probability $1-A_k$ when $Q\in\mathcal{Q}(k)$. Here $A_k$ is, of course, a constant between $0$ and $1$.

For each $w\in\Omega$ define \begin{equation*}
f^{L,w}(x):=\sum_{k=10}^{L}{B_k\sum_{Q\in\mathcal{Q}(k)}{\theta_Q(w)\mathcal{G}(2^{\zeta_k}(x-c_Q))}}
\end{equation*} where $c_Q$ is the center of $Q$ and $\{B_k\}$ is a sequence of positive numbers increasing at least in a geometric progression. 
Then as shown in \cite[Proof of Theorem1.4]{Ch_Se}, 
\begin{equation}\label{ubound}
\Big(  \int_{\Omega}{\Big\Vert   f^{L,w} \Big\Vert_{F_p^{0,q}}^p}d\mu(w) \Big)^{1/p} \lesssim \Big( \sum_{k=10}^{L}{B_k^p A_k}   \Big)^{1/p}.
\end{equation}
By putting $A_k=2^{-\zeta_kd}$ and $B_k=2^{\zeta_kd/p}$, \begin{equation*}
(\ref{ubound})\lesssim L^{1/p}. 
\end{equation*}

Moreover, due to (\ref{supporttrick}) one can write
\begin{equation*}
M^v(D)f^{L,w}(x)=\sum_{k=10}^{L}{B_k2^{\zeta_k(m-d)}\sum_{Q\in\mathcal{Q}(k)}{\theta_Q(w)\phi(x-c_Q)\sum_{n\in\mathcal{N}_k}{r_n(v)e^{2\pi i<x-c_Q,n>}}}}.
\end{equation*} 
Our claim is that
\begin{equation}\label{show}
\Big(\int_0^1{\int_{\Omega}{\big\Vert  M^v(D)f^{L,w} \big\Vert_{F_p^{0,t}}^p}d\mu(w)}dv\Big)^{1/p} \gtrsim L^{1/t}
\end{equation} and this implies that there exists $v_0\in [0,1]$ such that 
\begin{equation*}
\sup\{\Vert   M^{v_0}(D)f \Vert_{F_p^{0,t}} : \Vert f\Vert_{F_p^{0,q}}\leq 1, f\in\mathcal{E}(r)\}\gtrsim (\log{r})^{1/t-1/p},
\end{equation*}  from which the desired result follows.

Let's prove (\ref{show}). Note that 
\begin{align*}
&\big\Vert  M^v(D)f^{L,w} \big\Vert_{F_p^{0,t}}\\
&\approx \Big\Vert  \Big( \sum_{k=10}^{L}{B_{k}^t 2^{\zeta_kt(m-d)} \Big|   \sum_{Q\in\mathcal{Q}(k)}{\theta_Q(w)\phi(\cdot-c_Q) \sum_{n\in\mathcal{N}_k}{r_n(v)e^{2\pi i<\cdot-c_Q,n>}}      }   \Big|^t    }   \Big)^{1/t}  \Big\Vert_{L^p}.
\end{align*}
Let $\Omega(k,Q)$ be the event that $\theta_Q(w)=1$ but $\theta_{Q'}=0$ for all $Q'\not= Q$ in $\mathcal{Q}(k)$. The probability of this event satisfies
\begin{equation}\label{prob}
\mu(\Omega(k,Q))\geq A_k(1-A_k)^{card(\mathcal{Q}(k))-1}\gtrsim A_k
\end{equation} with $card(\mathcal{Q}(k))=2^{\zeta_kd}$ and our choice $A_k=2^{-\zeta_kd}$.
Here, the constant in the inequality does not depend on $k$ as $\lim_{M\to \infty}{\big( 1-\frac{1}{M}\big)^M}=e^{-1}$.

Then one has
\begin{align*}
&\Big(\int_0^1{\int_{\Omega}{\big\Vert  M^v(D)f^{L,w} \big\Vert_{F_p^{0,t}}^p}d\mu}dv\Big)^{1/p}\\
&\gtrsim \Big(  \sum_{k=10}^{L}{B_k^t2^{\zeta_kt(m-d)}\int_{[0,1]^d}{\int_0^1{\int_{\Omega}{\Big|  \sum_{Q\in\mathcal{Q}(k)}{\theta_Q(w)\phi(x-c_Q)\sum_{n\in\mathcal{N}_k}{r_n(v)e^{2\pi i<x-c_Q,n>}}}  \Big|^t}d\mu}dv}dx}   \Big)^{1/t}\\
&\gtrsim \Big(  \sum_{k=10}^{L}{B_k^t2^{\zeta_kt(m-d)}\sum_{Q\in\mathcal{Q}(k)}{  \mu(\Omega(k,Q))\int_{[0,1]^d}{|\phi(x-c_Q)|^t\int_0^1{\Big|  \sum_{n\in\mathcal{N}_k}{r_n(v)e^{2\pi i<x-c_Q,n>}}  \Big|^t} dv}dx   }}   \Big)^{1/t}\\
&\gtrsim \Big(  \sum_{k=10}^{L}{B_k^tA_k2^{\zeta_kt(m-d)}\sum_{Q\in\mathcal{Q}(k)}{  \int_{[0,1]^d}{|\phi(x-c_Q)|^t\int_0^1{\Big|  \sum_{n\in\mathcal{N}_k}{r_n(v)e^{2\pi i<x-c_Q,n>}}  \Big|^t} dv}dx   }}   \Big)^{1/t}
\end{align*} where the first inequality follows from H\"older's inequality with $p/t>1$, the second one from the estimate $\displaystyle\int_{\Omega}{|\cdots|}d\mu\geq \sum_{Q\in\mathcal{Q}(k)}{\int_{\Omega(k,Q)}{|\cdots|}d\mu}$, and the last one from (\ref{prob}). 

Finally, we apply the Khintchine's inequality with $card(\mathcal{N}_k)\sim2^{\zeta_kd}$ to get the lower bound
\begin{align*}
&\Big(  \sum_{k=10}^{L}{B_k^tA_k2^{\zeta_kt(m-d/2)}\sum_{Q\in\mathcal{Q}(k)}{\int_{[0,1]^d}{|\phi(x-c_Q)|^t}dx}}  \Big)^{1/t}\\
&\gtrsim \Big( \sum_{k=10}^{L}{B_k^tA_k2^{\zeta_kt(m-d/2)}2^{\zeta_kd}}\Big)^{1/t}\gtrsim L^{1/t}
\end{align*} because $|\phi(x-c_Q)|\gtrsim 1$ uniformly in $x, c_Q\in [0,1]^d$, and $m=-d(1/p-1/2)$, $A_k=2^{-\zeta_kd}$, and $B_k=2^{\zeta_kd/p}$.

\subsection{Proof of Theorem \ref{sharptheorem} (3)}

Suppose $2\leq p<q\leq \infty$ and $m=-d(1/2-1/p)$.
In this case, by using the same duality technique in \cite[Section 6.1.3]{Park}  one can obtains
\begin{equation*}
\sup\{\Vert   \big(M^{v_0}(D)\big)^*f \Vert_{F_p^{0,\infty}} : \Vert f\Vert_{F_p^{0,q}}\leq 1, f\in\mathcal{E}(R^A)\}\gtrsim_{\epsilon} (\log{R})^{\epsilon}
\end{equation*}
for any $0<\epsilon<1/p-1/q$ and some $A>0$, and this proves Theorem \ref{sharptheorem} (3).

\subsection{Proof of Theorem \ref{sharptheorem2} (2)}

We first assume $0<p\leq 2$ and $0<t<q\leq \infty$.
Define \begin{equation*}
g^{L}(x):=\sum_{k=10}^{L}{C_k 2^{\zeta_kd} \mathcal{G}(2^{\zeta_k}x)}.
\end{equation*}
Then
\begin{equation}\label{upperest}
\Vert g^L\Vert_{B_p^{0,q}} \lesssim \big\Vert \big\{ C_k\Vert 2^{\zeta_kd}\mathcal{G}(2^{\zeta_k}\cdot)\Vert_{L^p}\big\}_{k=10}^{L}\big\Vert_{l^q}\approx \big\Vert \big\{C_k2^{\zeta_kd(1-1/p)} \big\}_{k=10}^{L}\big\Vert_{l^q}.
\end{equation}

Moreover, by using (\ref{supporttrick})
\begin{equation*}
M^{v}(D)g^L(x)=\phi(x)\sum_{k=10}^{L}{C_k2^{\zeta_km}\sum_{n\in\mathcal{N}_k}{r_n(v)e^{2\pi i\langle x,n\rangle}}}
\end{equation*}
and thus
\begin{equation*}
\Big( \int_{0}^{1}{\big\Vert M^{v}(D)g^L\big\Vert_{B_p^{0,t}}^t}dv\Big)^{1/t} \approx \Big(\sum_{k=10}^{L}{C_k^t2^{\zeta_kmt}\int_0^1{\Big\Vert \phi\cdot\sum_{n\in\mathcal{N}_k}{r_n(v)e^{2\pi i\langle \cdot,n\rangle}}\Big\Vert_{L^p}^t}dv} \Big)^{1/t}.
\end{equation*}
Observe that
\begin{align*}
&\int_0^1{\Big\Vert \phi\cdot\sum_{n\in\mathcal{N}_k}{r_n(v)e^{2\pi i\langle \cdot,n\rangle}}\Big\Vert_{L^p}^t}dv\\
&\gtrsim \Big(\int_{\mathbb{R}^d}{|\phi(x)|^p\Big(\int_0^1{\Big| \sum_{n\in\mathcal{N}_k}{r_n(v)e^{2\pi i\langle x,n\rangle}}\Big|^{\min{(t,p)}}}dv\Big)^{p/\min{(t,p)}}}dx \Big)^{t/p}
\end{align*} (due to H\"older's inequality when $t>p$ or Minkowski inequality when $t<p$) and this expression is comparable to $2^{\zeta_kdt/2}$ by applying Khintchine's inequality. Therefore
one has
\begin{equation}\label{lowerest}
\Big( \int_{0}^{1}{\big\Vert M^{v}(D)g^L\big\Vert_{B_p^{0,t}}^t}dv\Big)^{1/t}\gtrsim \big\Vert \big\{C_k2^{\zeta_kd(1-1/p)} \big\}_{k=10}^{L}\big\Vert_{l^t}
\end{equation}
Finally, we are done by choosing $C_k=k^{-1/t}2^{-\zeta_kd(1-1/p)}$ so that $(\ref{upperest})\lesssim 1$ and $(\ref{lowerest})\approx \ln{L}$.

When $2<p<\infty$, the result follows from the duality arguments in \cite[6.2.2]{Park}.

\section*{Acknowledgement}

{The author would like to thank Andreas Seeger for the guidance and helpful discussions. The author also thanks the referee for carefully reading the paper and making numerous useful remarks.
The author is supported in part by NRF grant 2019R1F1A1044075 and was supported in part by NSF grant DMS 1500162}.

\end{document}